\colorlet{darkblue}{blue!55!black}
\colorlet{darkcyan}{cyan!50!black}
\colorlet{darkgreen}{green!60!black}
\def\eqref#1{\textcolor{darkblue}{(\ref{#1})}}
\crefname{hypothesis}{hypothesis}{hypotheses}
\Crefname{hypothesis}{Hypothesis}{Hypotheses}
\let\oldequation\equation
\let\oldendequation\endequation
\let\expandafter\oldequationstar\csname equation*\endcsname
\let\expandafter\oldendequationstar\csname endequation*\endcsname
\renewenvironment{equation*}{\linenomathNonumbers\oldequationstar}{\oldendequationstar\endlinenomath}
\let\oldalign\align
\let\oldendalign\endalign
\let\expandafter\oldalignstar\csname align*\endcsname
\let\expandafter\oldendalignstar\csname endalign*\endcsname
\renewenvironment{align*}{\linenomathNonumbers\oldalignstar}{\oldendalignstar\endlinenomath}
\newcounter{intro}
\newcounter{HypCounter}
\newtheorem{introthm}[intro]{Theorem}
\newtheorem{introcor}[intro]{Corollary}
\newtheorem{introprop}[intro]{Proposition}
\theoremstyle{plain}
\newtheorem{theorem}{Theorem}[section]
\newtheorem{lemma}[theorem]{Lemma}
\newtheorem{proposition}[theorem]{Proposition}
\theoremstyle{definition}
\newtheorem{definition}[theorem]{Definition}
\newtheorem{example}[theorem]{Example}
\newtheorem*{hypothesis*}{Hypothesis}
\newtheorem{remark}[theorem]{Remark}
\newtheorem*{intronot}{Notation}
\newtheorem*{ack}{Acknowledgements}
\numberwithin{equation}{section}
\numberwithin{theorem}{section}
\title[D\'{e}vissage and generation]{D\'{e}vissage for generation \\ in derived categories}
\author[S.~Dey]{Souvik Dey}
\address{S.~Dey,
Faculty of Mathematics and Physics,
Department of Algebra,
Charles University, 
Sokolovsk\'{a} 83, 186 75 Praha, 
Czech Republic}
\email{souvik.dey@matfyz.cuni.cz}
\author[P.~Lank]{Pat Lank}
\address{P.~Lank,
Department of Mathematics,
University of South Carolina, 
Columbia, SC 29208,
U.S.A.}
\email{plankmathematics@gmail.com}
\date{\today}
\keywords{derived categories, thick subcategories, strong generation, classical generation, devissage, modifications, alterations}
\subjclass[2020]{14A30 (primary), 14F08 (secondary), 13D09, 32S45} 
\begin{document}

\begin{abstract}
    We study a form of d\'{e}vissage for generation in derived categories of Noetherian schemes. First, we extend a result of Takahashi from the affine context to the global setting, showing that the bounded derived category is classically generated by a perfect complex together with structure sheaves of closed subschemes supported on the singular locus. Second, we make an observation for how generation behaves under the derived pushforward of a proper surjective morphism between Noetherian schemes. These results enable us to explicitly identify strong generators for projective schemes with isolated singularities and for singular varieties over a perfect field.
\end{abstract}

\maketitle

\section{Introduction}
\label{sec:intro}

Our work focuses on a form of d\'{e}vissage in the context of generation in derived categories of a Noetherian scheme. This is a process that allows for a better understanding of a collection of objects in such categories by studying a smaller subcollection. Firstly, we identify a subcategory that is constructed from a single perfect complex and the singular locus, which classically generates the bounded derived category of coherent sheaves. Secondly, we make a useful observation regarding the behavior of strong generation under the derived pushforward associated with a proper surjective morphism between Noetherian schemes.

We remind ourselves on a notion of generation for a triangulated category $\mathcal{T}$ that was introduced in \cite{BVdB:2003}. A subcategory $\mathcal{S}$ \textit{classically generates} $\mathcal{T}$ if the smallest thick subcategory containing $\mathcal{S}$, denoted $\langle \mathcal{S} \rangle$, equals $\mathcal{T}$. That is, every object in $\mathcal{T}$ can be built from $\mathcal{S}$ using cones, shifts, finite coproducts, and retracts. For $n \geq 0$, $\langle \mathcal{S} \rangle_n$ denotes the smallest full subcategory generated from $\mathcal{S}$ using these operations with at most $n-1$ cones (see \Cref{def:generators} for details). The subcategory $\mathcal{S}$ \textit{strongly generates} $\mathcal{T}$ if $\langle \mathcal{S} \rangle_n = \mathcal{T}$ for some $n$. When $\mathcal{S}$ consists of a single object $G$, we write $\langle G \rangle$ and $\langle G \rangle_n$. The \textit{Rouquier dimension} of $\mathcal{T}$ is the smallest $n\geq 0$ such that $\langle G \rangle_{n+1} = \mathcal{T}$ for some $G\in \mathcal{T}$; it is set to $+\infty$ otherwise.

The focus will be on the bounded derived category of coherent sheaves over a Noetherian scheme $ X $, denoted $ D^b_{\operatorname{coh}}(X) $. There is a deep connection between classical generation by a single object and the openness of the regular locus in Noetherian schemes \cite{Iyengar/Takahashi:2019,Dey/Lank:2024b}. This tells us the geometry can influence the presence of being generated by a single object. Recent work has developed our understanding of when $ D^b_{\operatorname{coh}}(X) $ can be generated by a single object, with efforts using commutative and noncommutative techniques\footnote{A far from comprehensive list includes: \cite{Aoki:2021,Neeman:2021, Elagin/Lunts/Schnurer:2020, Jatoba:2021, Iyengar/Takahashi:2016,Iyengar/Takahashi:2019, Dey/Lank/Takahashi:2023, Rouquier:2008, BILMP:2023, Bhaduri/Dey/Lank:2023,Burban/Drozd/Gavran:2017, DeDeyn/Lank/ManaliRahul:2024}.}. 

Despite these efforts, a comprehensive picture of generation in these derived categories remains a mystery. To address this, we shift gears by looking at subcategories of $D^b_{\operatorname{coh}}(X)$ which classically generate. The first candidate we had in mind comes from structure sheaves of closed subschemes. 

From \cite[\href{https://stacks.math.columbia.edu/tag/01YD}{Tag 01YF}]{StacksProject}, given a Noetherian scheme $X$, we can infer that every object $\operatorname{coh}(X)$ belongs to the thick subcategory $\mathcal{T}$ generated by $\mathcal{O}_X$ and $i_\ast\mathcal{O}_Z$ where $i \colon Z \to X$ is a closed immersion from an integral scheme\footnote{See proof of \cite[Lemme 6.9.2]{EGAIV2:1965} for variation in the affine case}. There is the distinguished triangle for any $E \in D^b_{\operatorname{coh}}(X)$:
\begin{displaymath}
    Z(E) \to E \to B(E)[1] \to Z(E)[1]
\end{displaymath}
where $Z(E)$ and $B(E)$ denote the cycle and boundary complexes respectively. These complexes decompose into finite direct sums of shifts of coherent sheaves in $\operatorname{coh}(X)$. Then it follows that $\mathcal{T}=D^b_{\operatorname{coh}}(X)$. Moreover, one could tensor $\mathcal{T}$ with any perfect complex $P$ with full support and use projection formula to achieve the same outcome for classical generation. See \Cref{prop:generate_generically_general} for an alternative argument.

There is an interesting refinement for the observation above in the affine setting. Namely, \cite{Takahashi:2014} tells us for $X$ an affine Noetherian scheme that $D^b_{\operatorname{coh}}(X)$ can be classically generated by $\mathcal{O}_X$ and structure sheaves of closed subscheme support in the singular locus of $X$. Recall the singular locus of $X$ is the collection of $p\in X$, denoted by $\operatorname{sing}(X)$, such that $\mathcal{O}_{X,p}$ is not a regular local ring.

The following result tells us how this can globalized.

\begin{introthm}[see \Cref{thm:devissage}]
    \label{introthm:devissage}
    Let $X$ be a Noetherian scheme. Suppose $P$ is a classical generator for $\operatorname{Perf}(X)$. Then $D^b_{\operatorname{coh}}(X)$ is classically generated by $P$ and the objects of the form $\mathbf{R} i_\ast \mathbf{L} i^\ast P$ where $i\colon Z \to X$ is a closed immersion from an integral scheme such that $i(Z)$ is contained in $\operatorname{sing}(X)$.
\end{introthm}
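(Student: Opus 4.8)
\emph{Plan.} Write $\mathcal{G}$ for the thick subcategory of $D^b_{\operatorname{coh}}(X)$ generated by $P$ together with all objects $\mathbf{R}i_\ast \mathbf{L}i^\ast P$ for $i\colon Z \to X$ a closed immersion from an integral scheme with $i(Z) \subseteq \operatorname{sing}(X)$; by the projection formula these generators are exactly $P \otimes^{\mathbf{L}} i_\ast \mathcal{O}_Z$. The dévissage recalled in the introduction (from \cite[Tag 01YF]{StacksProject}) combined with the full-support tensoring argument of \Cref{prop:generate_generically_general} shows that $D^b_{\operatorname{coh}}(X)$ is classically generated by $P$ and \emph{all} $P \otimes^{\mathbf{L}} i_\ast \mathcal{O}_Z$ with $Z$ integral closed, no restriction on the support. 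Since $\mathcal{G} \subseteq D^b_{\operatorname{coh}}(X)$ automatically, it therefore suffices to show $P \otimes^{\mathbf{L}} i_\ast \mathcal{O}_Z \in \mathcal{G}$ for every integral closed subscheme $Z$; the content is to absorb those $Z$ meeting the regular locus.

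I would run a Noetherian induction on the closed subset $W := i(Z)$, assuming $P \otimes^{\mathbf{L}} i'_\ast \mathcal{O}_{Z'} \in \mathcal{G}$ for every integral closed $Z'$ with $i'(Z') \subsetneq W$. The first point is that $\operatorname{sing}(X)$ is stable under specialization, since a localization of a regular local ring is regular; hence, writing $\eta$ for the generic point of $Z$, one has $W \subseteq \operatorname{sing}(X)$ precisely when $\eta \in \operatorname{sing}(X)$. If $\eta$ is singular this forces $W \subseteq \operatorname{sing}(X)$, so $P \otimes^{\mathbf{L}} i_\ast \mathcal{O}_Z$ is one of the defining generators of $\mathcal{G}$ and there is nothing to prove.

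The substantive case is when $\eta$ is a regular point. Then $(i_\ast \mathcal{O}_Z)_\eta = \kappa(\eta)$ has finite projective dimension over the regular local ring $\mathcal{O}_{X,\eta}$, so $i_\ast \mathcal{O}_Z$ is perfect near $\eta$; by openness of the perfect locus for the pseudo-coherent complex $i_\ast \mathcal{O}_Z$, together with the vanishing of $i_\ast \mathcal{O}_Z$ off $W$, it is perfect on the open set $V := (X \setminus W) \cup V_\eta$ for a suitable neighbourhood $V_\eta \ni \eta$. The crucial gain is that the closed complement $C := X \setminus V$ satisfies $C \subsetneq W$, because $\eta \in V$. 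Now I invoke the localization equivalence $D^b_{\operatorname{coh}}(X)/D^b_{\operatorname{coh},C}(X) \xrightarrow{\sim} D^b_{\operatorname{coh}}(V)$ induced by restriction $j^\ast$ along $j\colon V \hookrightarrow X$. Under $j^\ast$ the image $\langle j^\ast P\rangle$ of $\operatorname{Perf}(X) = \langle P\rangle$ equals $\operatorname{Perf}(V)$ (perfect complexes on $V$ extend to $X$, by Thomason--Trobaugh), while $j^\ast(i_\ast\mathcal{O}_Z) = (i_\ast\mathcal{O}_Z)|_V \in \operatorname{Perf}(V)$. The standard correspondence between thick subcategories of a Verdier quotient and those of the source containing the kernel then yields $i_\ast \mathcal{O}_Z \in \langle \operatorname{Perf}(X) \cup D^b_{\operatorname{coh},C}(X)\rangle$.

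Finally, applying the exact functor $- \otimes^{\mathbf{L}} P$ sends this membership into $\langle (P\otimes^{\mathbf{L}}\operatorname{Perf}(X)) \cup (P\otimes^{\mathbf{L}} D^b_{\operatorname{coh},C}(X))\rangle$. The first piece lies in $\operatorname{Perf}(X) = \langle P\rangle \subseteq \mathcal{G}$. For the second, the supported form of the dévissage gives $D^b_{\operatorname{coh},C}(X) = \langle i'_\ast \mathcal{O}_{Z'} : Z' \subseteq C \text{ integral closed}\rangle$, and since every such $Z'$ has $i'(Z') \subseteq C \subsetneq W$, the inductive hypothesis places each $P \otimes^{\mathbf{L}} i'_\ast \mathcal{O}_{Z'}$ in $\mathcal{G}$, whence $P\otimes^{\mathbf{L}} D^b_{\operatorname{coh},C}(X) \subseteq \mathcal{G}$. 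Thus $P \otimes^{\mathbf{L}} i_\ast \mathcal{O}_Z \in \mathcal{G}$, closing the induction. I expect the main obstacle to be exactly this regular-generic case: arranging the ``error'' to be supported on a \emph{proper} closed subset $C \subsetneq W$ (so the Noetherian induction genuinely descends) while tracking $i_\ast\mathcal{O}_Z$ through the localization equivalence and the thick-subcategory correspondence. The specialization-closedness of $\operatorname{sing}(X)$ and the openness of the perfect locus are precisely what permit the choice of $V$, and hence of $C \subsetneq W$, for an arbitrary Noetherian scheme.
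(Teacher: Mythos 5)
Your proposal is correct, but it follows a genuinely different route from the paper. The paper's proof of \Cref{thm:devissage} is a globalization argument: it covers $X$ by affine opens $U_i$, invokes \cite[Corollary 4.3.(1)]{Takahashi:2014} to see that the restricted generators classically generate each $D^b_{\operatorname{coh}}(U_i)$, and then glues these local statements using the local-to-global principle of \cite[Theorem 1.7]{BILMP:2023} (stalkwise building of $E_p \overset{\mathbf{L}}{\otimes} K(p)$ by $G_p$ implies global building of $E$ by $P \overset{\mathbf{L}}{\otimes} G$), finishing with a projection-formula computation. You bypass both of these inputs: starting from \Cref{prop:generate_generically_general} (legitimate and non-circular, since it is proved independently of \Cref{thm:devissage}; you should just remark that a classical generator of $\operatorname{Perf}(X)$ automatically has full support), you run a Noetherian induction on $i(Z)$ with a dichotomy at the generic point $\eta$: specialization-closedness of $\operatorname{sing}(X)$ disposes of singular $\eta$, while for regular $\eta$ the openness of the perfect locus produces an open $V$ containing $\eta$ and $X\setminus W$ on which $i_\ast\mathcal{O}_Z$ is perfect, and Thomason--Trobaugh together with the localization $D^b_{\operatorname{coh}}(X)/D^b_{\operatorname{coh},C}(X) \simeq D^b_{\operatorname{coh}}(V)$ descends the problem to the strictly smaller closed set $C$, where the inductive hypothesis and the supported dévissage apply. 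In effect you re-prove a Takahashi-type dévissage directly in the global setting; what this buys is independence from the affine theorem and from \cite{BILMP:2023}, at the price of invoking the bounded-coherent localization theorem, which the paper never needs. Two points deserve to be made precise, though neither is a gap: first, perfect complexes on $V$ extend to $X$ only up to direct summands (there is a $K_0$-obstruction to extension on the nose), which suffices for your purposes since $\langle j^\ast P\rangle$ is thick; second, the image of $\langle P \cup D^b_{\operatorname{coh},C}(X)\rangle$ in the Verdier quotient is a triangulated subcategory but need not be thick, so pulling the membership $q(i_\ast\mathcal{O}_Z) \in \langle q(P)\rangle$ back to $X$ requires the standard density trick (pass to $i_\ast\mathcal{O}_Z \oplus i_\ast\mathcal{O}_Z[1]$, whose class lands in the image on the nose, cf.\ \cite{Thomason:1997}) before the roof argument closes the induction.
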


The utility for \Cref{introthm:devissage} is demonstrated when the singular locus is finite as it allows for one to explicitly identify generators. An exciting case is that for Noetherian scheme with isolated singularities (e.g.\ normal quasi-projective surfaces over a field) we have an explicit object that classically generates:

\begin{introcor}
    \label{introcor:isolated_singularities}
    Let $X$ be a scheme with isolated singularities $p_1,\ldots,p_n$ that is projective over a Noetherian local ring $R$. Denote the closed immersion by $t\colon X \to \mathbb{P}^N_R$. Set $\mathcal{L}$ to be the line bundle $t^\ast \mathcal{O}_{\mathbb{P}^n_R} (-1)$. Then $D^b_{\operatorname{coh}}(X)$ is classically generated by the following object:
    \begin{displaymath}
        (\bigoplus^N_{l=0} \mathcal{L}^{\otimes l} ) \oplus (\bigoplus^n_{j=1}i_{j,\ast}\mathcal{O}_{\operatorname{Spec}(\kappa(p_j))})
    \end{displaymath}
    where $i_j\colon \operatorname{Spec}(\kappa(p_j)) \to X$ is the associated closed immersion for each $j$.
\end{introcor}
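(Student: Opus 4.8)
The plan is to apply \Cref{introthm:devissage} to the perfect complex $P \colonequals \bigoplus_{l=0}^N \mathcal{L}^{\otimes l}$ and then to make the resulting dévissage terms over the singular points explicit. Before doing so, I would record which closed immersions occur. As $X$ has isolated singularities, $\operatorname{sing}(X) = \{p_1, \ldots, p_n\}$ is a finite set of closed points; any integral closed subscheme $Z \subseteq X$ whose image lies in $\operatorname{sing}(X)$ is then irreducible with image a single closed point, hence reduced of dimension zero, i.e. $Z = \operatorname{Spec}(\kappa(p_j))$ and $i = i_j$ for a unique $j$. So the only dévissage generators produced by \Cref{introthm:devissage} beyond $P$ itself are the objects $\mathbf{R}i_{j,\ast}\mathbf{L}i_j^\ast P$ for $1 \le j \le n$.

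The crux is to show that $P$ classically generates $\operatorname{Perf}(X)$. Here $\mathcal{O}_X(1) \colonequals \mathcal{L}^{-1} = t^\ast\mathcal{O}_{\mathbb{P}^N_R}(1)$ is very ample, and a standard consequence of ampleness is that every perfect complex lies in $\langle \mathcal{O}_X(-m) : m \ge 0\rangle$: each coherent sheaf admits a resolution by finite direct sums of the twists $\mathcal{O}_X(-m)$ with $m \gg 0$, and since a perfect complex has finite (local, hence global) projective dimension, a sufficiently high truncation of such a resolution realizes it, up to a direct summand, as a finite complex of these twists. It then remains to cut the range down to $0 \le m \le N$. For this I would restrict to $X$ the Koszul complex on the homogeneous coordinates $x_0, \ldots, x_N$ of $\mathbb{P}^N_R$. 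These sections generate $\mathcal{O}_{\mathbb{P}^N_R}(1)$ and so have no common zero, whence the Koszul complex
\begin{displaymath}
0 \to \mathcal{O}_{\mathbb{P}^N_R}(-N-1) \to \mathcal{O}_{\mathbb{P}^N_R}(-N)^{\oplus\binom{N+1}{N}} \to \cdots \to \mathcal{O}_{\mathbb{P}^N_R}(-1)^{\oplus\binom{N+1}{1}} \to \mathcal{O}_{\mathbb{P}^N_R} \to 0
\end{displaymath}
is exact. Its terms are locally free, so applying $t^\ast$ yields the Koszul complex on the sections $t^\ast x_0, \ldots, t^\ast x_N$ of $\mathcal{L}^{-1}$; these again have no common zero on $X$, so the pulled-back complex
\begin{displaymath}
0 \to \mathcal{L}^{\otimes(N+1)} \to (\mathcal{L}^{\otimes N})^{\oplus\binom{N+1}{N}} \to \cdots \to \mathcal{L}^{\oplus\binom{N+1}{1}} \to \mathcal{O}_X \to 0
\end{displaymath}
is also exact. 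This exhibits $\mathcal{L}^{\otimes(N+1)}$ as an iterated cone of $\mathcal{O}_X, \mathcal{L}, \ldots, \mathcal{L}^{\otimes N}$; twisting by $\mathcal{L}^{\otimes j}$ and inducting on $m$ shows $\mathcal{O}_X(-m) = \mathcal{L}^{\otimes m} \in \langle P\rangle$ for every $m \ge 0$. Combined with the ampleness step, this gives $\operatorname{Perf}(X) = \langle P\rangle$, so $P$ is a classical generator as required by \Cref{introthm:devissage}.

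Finally I would identify the singular dévissage terms. Since $\kappa(p_j)$ is a field, $\mathbf{L}i_j^\ast P$ is a perfect complex over $\operatorname{Spec}(\kappa(p_j))$, hence a finite direct sum of shifts of $\mathcal{O}_{\operatorname{Spec}(\kappa(p_j))}$; it is nonzero because $\mathcal{O}_X$ is a direct summand of $P$ and $\mathbf{L}i_j^\ast\mathcal{O}_X = \mathcal{O}_{\operatorname{Spec}(\kappa(p_j))}$. Pushing forward, $\mathbf{R}i_{j,\ast}\mathbf{L}i_j^\ast P$ is a nonzero finite direct sum of shifts of $i_{j,\ast}\mathcal{O}_{\operatorname{Spec}(\kappa(p_j))}$, so the two objects generate the same thick subcategory. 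Replacing each dévissage term by $i_{j,\ast}\mathcal{O}_{\operatorname{Spec}(\kappa(p_j))}$ and using that a finite direct sum generates the same thick subcategory as the collection of its summands, \Cref{introthm:devissage} yields that the displayed object classically generates $D^b_{\operatorname{coh}}(X)$. The main obstacle is the middle step: verifying that the finitely many twists $\mathcal{O}_X, \ldots, \mathcal{O}_X(-N)$ already generate $\operatorname{Perf}(X)$, which is where the ampleness resolution and the exactness of the restricted Koszul complex do the real work.
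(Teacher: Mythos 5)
Your proof is correct, and its overall skeleton matches the paper's: the paper deduces this corollary from \Cref{introthm:devissage} via \Cref{ex:surface_generator}, applied to exactly your $P=\bigoplus_{l=0}^{N}\mathcal{L}^{\otimes l}$, with the same two bookkeeping observations you make --- that the only integral closed subschemes landing in $\operatorname{sing}(X)$ are the reduced points $\operatorname{Spec}(\kappa(p_j))$, and that each $\mathbf{R}i_{j,\ast}\mathbf{L}i_j^\ast P$ is a nonzero finite direct sum of shifts of $i_{j,\ast}\mathcal{O}_{\operatorname{Spec}(\kappa(p_j))}$, hence generates the same thick subcategory (your explicit remark that nonvanishing follows from the $\mathcal{O}_X$-summand of $P$ is a detail the paper leaves implicit). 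Where you genuinely diverge is the crux step that $P$ classically generates $\operatorname{Perf}(X)$. The paper handles this entirely by citation: \cite[Tag 0A9V]{StacksProject} gives that $\bigoplus_{i=0}^{N}\mathcal{O}_{\mathbb{P}^N_R}(-i)$ classically generates $\operatorname{Perf}(\mathbb{P}^N_R)$, and \cite[Tag 0BQT]{StacksProject} transports this generator along the affine morphism $t$. You instead reprove the statement directly on $X$: ample resolutions plus the truncation/direct-summand trick place every perfect complex in $\langle \mathcal{L}^{\otimes m} : m\geq 0\rangle$, and the pulled-back Koszul complex on $t^\ast x_0,\ldots,t^\ast x_N$ (exact because the sections have no common zero, so locally one of them is a unit) cuts the range down to $0\leq m\leq N$. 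Your route is more self-contained and makes visible exactly where very-ampleness and the bound $N$ enter; the paper's route is shorter, outsourcing essentially the same Koszul/Beilinson-type reasoning to the Stacks Project. One point to tighten: in your ampleness step, splitting $E$ off the brutal truncation of its resolution requires $\operatorname{Hom}(E,\sigma^{<-n}F)=0$ for $n\gg 0$, and this vanishing is precisely where perfectness of $E$ (dualizability/finite Tor-amplitude) together with finite-dimensionality of $X$ is needed; your phrase ``finite (local, hence global) projective dimension'' points at the right ingredient, but the standard Thomason--Trobaugh argument should be spelled out there, since the analogous claim fails for non-perfect bounded complexes.
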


This is a special case of a more general setting for Noetherian schemes with isolated singularities (see \Cref{ex:surface_generator}). Observe its utility for projective varieties over a field as it identifies an explicit strong generator. To the best of our knowledge, the explicitness of such an object is very new even for mixed or positive characteristic settings. It becomes an interesting problem for future work to now leverage this concrete object in studying the Rouquier dimension of $D^b_{\operatorname{coh}}(X)$ for varieties $X$ over a field with isolated singularities. 

A common method to detecting whether or not $D^b_{\operatorname{coh}}(X)$ can be generated by a single object is typically done by studying the behavior of the derived pushforward. Such strategies has appeared in the setting of varieties with rational singularities \cite[Theorem 1.1]{Hara:2017}, \cite[Corollary 3.3.8]{Favero:2009}, or \cite[Lemma 7.4]{Kawamata:2006}. 

Our next goal was to establish analogous results in a much broader context. Initially, we observed that it is possible to prove that $\langle \mathbf{R}\pi_\ast D^b_{\operatorname{coh}}(Y) \rangle = D^b_{\operatorname{coh}}(X)$ whenever $\pi\colon Y \to X$ is a proper surjective morphism. With the help of new key lemmas, we were then able to adapt an argument of Aoki \cite{Aoki:2021} to strengthen this result, obtaining a statement about strong generation in the derived category. Details can be found in \Cref{lem:boundedness,lem:factoring_n_fold_extensions,lem:truncate_down_to_bounded,lem:weak_form_aoki}. 

We push the upgrade further to say something about generation by a single object.

\begin{introprop}[see \Cref{prop:devissage_classical}]\label{introprop:devissage_classical}
    Let $\pi\colon Y \to X$ be a proper surjective morphism of Noetherian schemes. If $G$ is a classical (resp.\ strong) generator for $D^b_{\operatorname{coh}}(Y)$, then $\mathbf{R}\pi_\ast G$ is a classical (resp.\ strong) generator for $D^b_{\operatorname{coh}}(X)$.
\end{introprop}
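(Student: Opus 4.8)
The plan is to reduce this single-object statement to the subcategory-level generation that the preceding lemmas already supply, exploiting the compatibility of the thickening operations $\langle-\rangle_n$ with exact functors. First I would record that, by \Cref{lem:boundedness}, the derived pushforward of a proper morphism restricts to an exact functor $\mathbf{R}\pi_\ast\colon D^b_{\operatorname{coh}}(Y)\to D^b_{\operatorname{coh}}(X)$; this makes $\mathbf{R}\pi_\ast G$ a genuine object of the target and ensures the hypotheses are sensible. The two purely formal inputs I would then lean on are standard properties of generation levels (see \Cref{def:generators}): any exact functor $F$ satisfies $F(\langle\mathcal{S}\rangle_n)\subseteq\langle F(\mathcal{S})\rangle_n$, because $F$ preserves shifts, cones, finite coproducts, and retracts; and the levels are submultiplicative, $\langle\langle\mathcal{S}\rangle_a\rangle_b\subseteq\langle\mathcal{S}\rangle_{ab}$.

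Applying the first property with $F=\mathbf{R}\pi_\ast$ to a generator $G$ gives the key inclusions: if $\langle G\rangle=D^b_{\operatorname{coh}}(Y)$ then $\mathbf{R}\pi_\ast D^b_{\operatorname{coh}}(Y)\subseteq\langle\mathbf{R}\pi_\ast G\rangle$, and if $\langle G\rangle_n=D^b_{\operatorname{coh}}(Y)$ then $\mathbf{R}\pi_\ast D^b_{\operatorname{coh}}(Y)\subseteq\langle\mathbf{R}\pi_\ast G\rangle_n$. For the classical statement I would combine the observation from the introduction, $\langle\mathbf{R}\pi_\ast D^b_{\operatorname{coh}}(Y)\rangle=D^b_{\operatorname{coh}}(X)$, with the first inclusion to obtain $D^b_{\operatorname{coh}}(X)=\langle\mathbf{R}\pi_\ast D^b_{\operatorname{coh}}(Y)\rangle\subseteq\langle\langle\mathbf{R}\pi_\ast G\rangle\rangle=\langle\mathbf{R}\pi_\ast G\rangle$, forcing equality.

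For the strong statement the essential input is the quantitative refinement provided by \Cref{lem:weak_form_aoki}: there is a constant $c$ with $D^b_{\operatorname{coh}}(X)=\langle\mathbf{R}\pi_\ast D^b_{\operatorname{coh}}(Y)\rangle_c$. Feeding in the inclusion $\mathbf{R}\pi_\ast D^b_{\operatorname{coh}}(Y)\subseteq\langle\mathbf{R}\pi_\ast G\rangle_n$ and then invoking submultiplicativity yields $D^b_{\operatorname{coh}}(X)\subseteq\langle\langle\mathbf{R}\pi_\ast G\rangle_n\rangle_c\subseteq\langle\mathbf{R}\pi_\ast G\rangle_{nc}$, so that $\mathbf{R}\pi_\ast G$ strongly generates with the explicit bound $nc$ on the number of cones.

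The genuinely substantive ingredient is thus \Cref{lem:weak_form_aoki}: once one knows that the full image $\mathbf{R}\pi_\ast D^b_{\operatorname{coh}}(Y)$ generates $D^b_{\operatorname{coh}}(X)$ in a uniformly bounded number of steps, the passage to the single object $\mathbf{R}\pi_\ast G$ is a formal consequence of exactness and submultiplicativity. I therefore expect the main obstacle to lie entirely upstream, in securing the uniform constant $c$ through the adaptation of Aoki's argument, rather than in the deduction carried out here. The only point in the present argument requiring care is to confirm that the subcategory-level bound is stated with a finite $c$ independent of the object, which is exactly what the cited lemma delivers.
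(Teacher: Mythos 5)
Your proposal is correct and takes essentially the same approach as the paper: both arguments rest on \Cref{lem:weak_form_aoki} and then conclude formally from the fact that the exact functor $\mathbf{R}\pi_\ast$ carries $\langle G \rangle_n$ into $\langle \mathbf{R}\pi_\ast G \rangle_n$, with your explicit bound $nc$ via submultiplicativity being just the quantitative form of what the paper dismisses as ``the other case follows similarly.'' One small slip: the fact that $\mathbf{R}\pi_\ast$ preserves $D^b_{\operatorname{coh}}$ is Grothendieck's coherence theorem for proper morphisms, not a consequence of \Cref{lem:boundedness}, though nothing in your argument depends on this mis-attribution.
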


\Cref{introprop:devissage_classical} gives generalizations of \cite[Theorem E]{Lank:2023}, \cite[Lemma 3.15]{Lank/Olander:2024}, and \cite[Corollary 8.1.3]{Gaitsgory:2013}. A convenient consequence to \Cref{introprop:devissage_classical} is that one can explicitly write down a strong generator for varieties over a perfect field.

\begin{introcor}
    [see \Cref{ex:generators_variety}]
    Let $X$ be a variety over a perfect field. By \cite[Theorem 4.1]{deJong:1996}, there exists a proper surjective morphism $\pi \colon \widetilde{X}\to X$ with $\widetilde{X}$ is a regular quasi-projective variety. Let $\mathcal{L}$ be a very ample line bundle on $\widetilde{X}$. Then $D^b_{\operatorname{coh}}(X)$ is strongly generated by the  object $\mathbf{R}\pi_\ast (\oplus^{\dim X}_{l=-\dim X} \mathcal{L}^{\otimes l})$.
\end{introcor}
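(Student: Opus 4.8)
The plan is to combine the factorization $\pi$ gives with the two main results of the paper, applied to the regular quasi-projective target $\widetilde{X}$.

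First I would establish that $D^b_{\operatorname{coh}}(\widetilde{X})$ is strongly generated by $\bigoplus_{l=-\dim X}^{\dim X}\mathcal{L}^{\otimes l}$. Since $\widetilde{X}$ is a regular quasi-projective variety, it is in particular a separated Noetherian scheme with an ample family of line bundles, so $\operatorname{Perf}(\widetilde{X})=D^b_{\operatorname{coh}}(\widetilde{X})$ and the category admits a single classical generator built from tensor powers of the very ample $\mathcal{L}$. The standard fact (going back to the ampleness estimates for projective space and extended by the works cited in the footnote of the introduction, e.g.\ \cite{Rouquier:2008,BVdB:2003}) is that for a quasi-projective scheme over a field, $\bigoplus_{l} \mathcal{L}^{\otimes l}$ over a range of $2\dim X+1$ consecutive twists classically generates $\operatorname{Perf}(\widetilde{X})$; regularity then upgrades this to strong generation since regular schemes of finite Krull dimension have finite Rouquier dimension. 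Concretely, $\operatorname{Perf}(\widetilde{X})=D^b_{\operatorname{coh}}(\widetilde{X})$ together with the finite global dimension forces $\langle \bigoplus_l \mathcal{L}^{\otimes l}\rangle_n = D^b_{\operatorname{coh}}(\widetilde{X})$ for some finite $n$. I would cite the relevant ampleness/generation statement rather than reprove it.

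Second, having a strong generator $G \colonequals \bigoplus_{l=-\dim X}^{\dim X}\mathcal{L}^{\otimes l}$ for $D^b_{\operatorname{coh}}(\widetilde{X})$, I would invoke \Cref{introprop:devissage_classical} directly. The morphism $\pi\colon \widetilde{X}\to X$ supplied by de Jong's alteration \cite[Theorem 4.1]{deJong:1996} is proper and surjective between Noetherian schemes, which is exactly the hypothesis of \Cref{introprop:devissage_classical}. Applying the ``strong'' half of that proposition, $\mathbf{R}\pi_\ast G = \mathbf{R}\pi_\ast\bigl(\bigoplus_{l=-\dim X}^{\dim X}\mathcal{L}^{\otimes l}\bigr)$ is a strong generator for $D^b_{\operatorname{coh}}(X)$, which is precisely the claimed conclusion.

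The only genuine content to check is the first step, namely that the prescribed range of twists of $\mathcal{L}$ really gives a (strong) generator on $\widetilde{X}$, and that the range $[-\dim X,\dim X]$ is adequate. I expect this to be the main obstacle only in the sense of bookkeeping: one must confirm that $\dim \widetilde{X} \le \dim X$ (the alteration does not raise dimension, as $\pi$ is generically finite and surjective), so that a window of width $2\dim X + 1 \geq 2\dim\widetilde{X}+1$ twists suffices for the ampleness-based generation result on $\widetilde{X}$. Everything after that is a formal application of \Cref{introprop:devissage_classical}, so the proof is short and the real work is entirely in citing the correct generation-by-twists statement for regular quasi-projective varieties and verifying the dimension bound.
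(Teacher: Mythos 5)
Your proposal is correct and follows essentially the same route as the paper's \Cref{ex:generators_variety}: produce a strong generator for $D^b_{\operatorname{coh}}(\widetilde{X})$ from twists of the very ample line bundle, then push it forward along $\pi$ via \Cref{prop:devissage_classical}. The only cosmetic difference is that the paper justifies the upstairs strong generation by citing Orlov's theorem together with Rouquier directly (rather than your classical-generation-plus-finite-Rouquier-dimension upgrade), and it records that de Jong's alteration satisfies $\dim\widetilde{X}=\dim X$, which disposes of your dimension bookkeeping.
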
 

Similar to \Cref{introcor:isolated_singularities}, we construct an explicit object that strongly generates the bounded derived category of coherent sheaves, $D^b_{\operatorname{coh}}(X)$, for any singular variety $X$ defined over a perfect field. We anticipate that this object will prove to be very useful in future work. Particularly, in the study of the Rouquier dimension for varieties with singularities that are not isolated (i.e.\ where \Cref{introcor:isolated_singularities} is not applicable).

\begin{intronot}
    Let $X$ be a Noetherian scheme.
    \begin{enumerate}
        \item $D(X)$ is the derived category of $\mathcal{O}_X$-modules 
        \item $D_{\operatorname{Qcoh}}(X)$ is the strictly full subcategory of objects in $D(X)$ whose cohomology sheaves are quasi-coherent $\mathcal{O}_X$-modules
        \item $D^b_{\operatorname{coh}}(X)$ is the strictly full subcategory of $D(X)$ whose objects are bounded with coherent cohomology sheaves
        \item $\operatorname{Perf}(X)$ is the strictly full subcategory of $D(X)$ consisting of perfect complexes
        \item $D^b_{\operatorname{coh}}(R)$ denotes, by abuse of notation, $D^b_{\operatorname{coh}}(\operatorname{Spec}(R))$ for a commutative Noetherian ring $R$.
    \end{enumerate}
\end{intronot}

\begin{ack}
    Souvik Dey was partially supported by the Charles University Research Center program No. UNCE/24/SCI/022, and Grant No. GA CR 23-05148S from the Czech Science Foundation. Pat Lank was partially supported by the National Science Foundation under Grant No. DMS-1928930 while in residence at the Simons Laufer Mathematical Sciences Institute (formerly MSRI). Both authors appreciate the helpful comments of anonymous referees that led to improvements in the paper, and thank Adeel A. Khan for making us aware of \cite{Gaitsgory:2013}.
\end{ack}

\section{Generation}
\label{sec:generation}

We briefly cover generation in triangulated categories. For a further treatment on necessary background, the reader is referred to \cite{BVdB:2003, Rouquier:2008}. Let $\mathcal{T}$ be a triangulated category with shift functor $[1]\colon \mathcal{T} \to \mathcal{T}$.

\begin{definition}\label{def:thick}
    Let $\mathcal{S}$ be a subcategory of $\mathcal{T}$.
    \begin{enumerate}
        \item $\operatorname{add}(\mathcal{S})$ is the smallest strictly full subcategory of $\mathcal{T}$ containing $\mathcal{S}$ that is closed under shifts, finite coproducts, and retracts
        \item $\mathcal{S}$ is said to be \textbf{thick} if it is closed under retracts and is a strictly full triangulated subcategory of $\mathcal{T}$
        \item $\langle \mathcal{S} \rangle$ is the smallest thick subcategory in $\mathcal{T}$ containing $\mathcal{S}$
        \item $\langle \mathcal{S} \rangle_0$ is the full subcategory consisting of all objects isomorphic to the zero object
        \item $\langle \mathcal{S} \rangle_1:= \operatorname{add}(\mathcal{S})$
        \item $\langle \mathcal{S} \rangle_n := \operatorname{add} \big( \{ \operatorname{cone}(\varphi) : \varphi\in \operatorname{Hom}_\mathcal{T} (\langle \mathcal{S} \rangle_{n-1}, \langle \mathcal{S} \rangle_1)  \} \big)$.
    \end{enumerate}
    If $\mathcal{S}$ consists of a single object $G$, then we write $\langle G \rangle$ for $\langle \mathcal{S} \rangle$. Additionally, we write $\mathcal{S}\star \mathcal{S}$ for the collection of $E\in \mathcal{T}$ which fit into a distinguished triangle
    \begin{displaymath}
        S_1 \to E \to S_2 \to S_1 [1]
    \end{displaymath}
    where $S_1,S_2\in \mathcal{S}$. For any $n\geq 1$, we set
    \begin{displaymath}
        \mathcal{S}^{\star n} := \underbrace{\mathcal{S}\star \cdots \star \mathcal{S}}_{n \textrm{ times}}.
    \end{displaymath}
\end{definition}

\begin{remark}
    In the notation of \Cref{def:thick}, we have an exhaustive filtration for the smallest thick subcategory containing $\mathcal{S}$ in $\mathcal{T}$:
    \begin{displaymath}
        \langle \mathcal{S} \rangle_0 \subseteq \langle \mathcal{S} \rangle_1 \subseteq \cdots \subseteq \bigcup_{n=0}^\infty \langle \mathcal{S} \rangle_n = \langle \mathcal{S} \rangle.
    \end{displaymath}
\end{remark}

\begin{definition}\label{def:generators}
    Let $E$ be an object of $\mathcal{T}$ and $\mathcal{S}$ be a subcategory of $\mathcal{T}$.
    \begin{enumerate}
        \item $E$ is said to be \textbf{finitely built} by $\mathcal{S}$ if $E$ is in $ \langle \mathcal{S} \rangle$
        \item $\mathcal{S}$ is said to \textbf{classically generate} $\mathcal{T}$ if $\langle \mathcal{S} \rangle = \mathcal{T}$; if $\mathcal{S}$ consists of a single object $G$, then $G$ is called a \textbf{classical generator}
        \item $\mathcal{S}$ is said to \textbf{strongly generate} $\mathcal{T}$ if there exists $n\geq 0$ such that $\langle \mathcal{T} \rangle_n = \mathcal{T}$; if $\mathcal{S}$ consists of a single object $G$, then $G$ is called a \textbf{strong generator}.
    \end{enumerate}
\end{definition}

\begin{example}\label{ex:generators}
    The following list, though far from comprehensive, are familiar instances in both geometric and algebraic contexts where generators can be explicitly identified:
    \begin{enumerate}
        \item $R\oplus k$ is a strong generator for $D^b_{\operatorname{coh}}(R)$ if $(R,\mathfrak{m},k)$ is a local Noetherian ring with an isolated singularity, see \cite[Proposition A.2]{KMVdB:2011}
        \item $\operatorname{Perf}(X)$ admits a classical generator for $X$ a Noetherian scheme, see \cite[Theorem 3.1.1]{BVdB:2003}; additionally, if $X$ is regular and quasi-affine, then $\mathcal{O}_X$ is a classical generator for $D^b_{\operatorname{coh}}(X)$ (see \cite[Corollary 1.10]{BILMP:2023} and \cite[\href{https://stacks.math.columbia.edu/tag/0BQT}{Tag 0BQT}]{StacksProject})
        \item $\bigoplus^{\dim X}_{i=0} \mathcal{L}^{\otimes i}$ is a strong generator for $D^b_{\operatorname{coh}}(X)$ if $\mathcal{L}$ is a very ample line bundle on a smooth quasi-projective variety $X$ over a field and $N\gg 0$, see \cite[Theorem 4]{Orlov:2009}.
        \item $F_\ast^e (\bigoplus^{\dim X}_{i=0} \mathcal{L}^{\otimes i})$ is a strong generator for $D^b_{\operatorname{coh}}(X)$ if $\mathcal{L}$ is a very ample line bundle on a singular quasi-projective variety $X$ over a perfect field of positive characteristic where $F^e \colon X \to X$ is the $e$-th iterate of the Frobenius morphism on $X$, see \cite[Corollary 3.9]{BILMP:2023}.
    \end{enumerate}
\end{example}

\begin{definition}\label{def:support}
    Let $X$ be a Noetherian scheme, $E$ an object of $D^b_{\operatorname{coh}}(X)$, and $i \colon Z \to X$ a closed immersion.
    \begin{enumerate}
        \item $\operatorname{Supp}(E) := \bigcup^{\infty}_{j=-\infty} \operatorname{Supp}(\mathcal{H}^j (E))$ is called the \textbf{support} of $E$
        \item $E$ is \textbf{supported} on $Z$ whenever $\operatorname{Supp}(E)$ is contained in $Z$; we say $E$ has \textbf{full support} if $\operatorname{Supp}(E)=X$.
    \end{enumerate}
\end{definition}

\section{Results}
\label{sec:results}

This section will prove our main results. To start, we need something that is akin to generic freeness of coherent $\mathcal{O}_X$-modules on a Noetherian integral scheme. The following might be known to a few, but we offer an alternative proof for the sake of interest in our setting.

\begin{lemma}\label{lem:derived_generic_free}
    Let $X$ be a Noetherian integral scheme. If $E$ and $G$ are objects of $D^b_{\operatorname{coh}}(X)$ with full support, then there exists an open immersion $i \colon U \to X$ such that $\mathbf{L} i^\ast E $ is in $ \langle \mathbf{L} i^\ast G \rangle_1$ and $\mathbf{L} i^\ast G $ is in $ \langle \mathbf{L} i^\ast E \rangle_1$.
\end{lemma}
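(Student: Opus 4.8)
The plan is to reduce everything to a generic point computation. Since $X$ is integral and Noetherian, it has a unique generic point $\eta$ whose local ring $\mathcal{O}_{X,\eta} = K$ is a field (the function field of $X$). Both $E$ and $G$ have full support, so their stalks at $\eta$ are nonzero objects of $D^b_{\operatorname{coh}}(K) = D^b(\mathrm{Vect}_K)$. Over a field every bounded complex splits as a finite direct sum of shifts of finite-dimensional vector spaces, so $E_\eta$ and $G_\eta$ are each finitely built from one another in a single step: $E_\eta \in \langle G_\eta \rangle_1$ and $G_\eta \in \langle E_\eta \rangle_1$, with the building data being explicit matrices of scalars in $K$.

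First I would make the comparison at $\eta$ precise and then spread it out to a neighborhood. The key point is that the relations witnessing $E_\eta \in \langle G_\eta\rangle_1$ amount to a chosen isomorphism in $D^b_{\operatorname{coh}}(K)$ between $E_\eta$ and a finite direct sum of shifts of $G_\eta$, together with a splitting idempotent realizing the retract. Each such morphism and homotopy is given by finitely many elements of $K = \mathcal{O}_{X,\eta} = \varinjlim_{\eta \in V} \mathcal{O}_X(V)$, so every scalar appearing in these data is defined on some common affine open $U \ni \eta$. On such a $U$ the morphisms of complexes and the chosen homotopies descend to honest maps of complexes of coherent sheaves, giving an isomorphism $\mathbf{L}i^\ast E \cong \bigoplus_k (\mathbf{L}i^\ast G)[n_k]$ in $D^b_{\operatorname{coh}}(U)$ after possibly shrinking $U$ so that the descended maps are mutually inverse (i.e.\ so that the finitely many ``error'' sections vanish). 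Symmetrically one obtains the reverse containment, and intersecting the two open sets yields a single $U$ that works for both.

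I would organize this concretely as follows. Replace $E$ and $G$ by strictly perfect or at least bounded complexes of coherent sheaves representing them; restricting to a small affine $U$ where everything is free makes $\mathbf{L}i^\ast$ computable termwise. Over the generic point, build explicit cochain maps $f_\eta\colon E_\eta \to \bigoplus_k G_\eta[n_k]$ and $g_\eta$ in the other direction realizing the retract, together with nullhomotopies certifying $g_\eta f_\eta \simeq \mathrm{id}$. Because these involve only finitely many scalars in $K$, all of them are defined and the homotopy identities all hold on a single affine open $U \ni \eta$ (clearing finitely many denominators and then shrinking to kill finitely many vanishing sections). Then $\mathbf{L}i^\ast E$ is a retract of a finite sum of shifts of $\mathbf{L}i^\ast G$, hence lies in $\langle \mathbf{L}i^\ast G\rangle_1$, and the symmetric argument gives the other containment.

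The main obstacle, and the step deserving the most care, is the passage from an \emph{isomorphism in the derived category} at $\eta$ to an honest relation over an open neighborhood: one must argue that a quasi-isomorphism defined generically, together with its homotopy-inverse data, extends over some $U$ and remains a quasi-isomorphism there. The cleanest way to handle this is to work with genuine cochain maps and homotopies (not just derived-category morphisms) so that the statement becomes ``finitely many sections are defined and finitely many equations hold,'' each of which is an open condition at $\eta$; finiteness of the indexing data is what lets us intersect these conditions into a single nonempty open $U$. One should also confirm that shrinking $U$ does not destroy the full-support hypotheses needed to keep both stalks nonzero, which is automatic since any nonempty open of an integral scheme contains $\eta$.
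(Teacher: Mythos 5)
Your proposal is correct, but it takes a genuinely different route from the paper's proof. Both arguments hinge on the same key observation: at the generic point $\eta$ the local ring is the function field $K$, bounded complexes of $K$-vector spaces split as sums of shifts of their cohomology, and full support makes both stalks nonzero, so $E_\eta \in \langle G_\eta \rangle_1$ and $G_\eta \in \langle E_\eta \rangle_1$. The paper finishes by citing existing machinery of Letz: the locus of points $p$ with $E_p \in \langle G_p \rangle_1$ is Zariski open \cite[Proposition 3.5]{Letz:2021}, it is nonempty because it contains $\eta$, and a local-to-global result \cite[Corollary 3.4]{Letz:2021} upgrades the stalkwise containments over an open $W$ to $\mathbf{L}j^\ast E \in \langle \mathbf{L}j^\ast G \rangle_1$ and $\mathbf{L}j^\ast G \in \langle \mathbf{L}j^\ast E \rangle_1$. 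You instead reprove, by hand, exactly the special case of that machinery which is needed: represent $E$ and $G$ by bounded complexes of coherent sheaves (legitimate on a Noetherian scheme), realize the retract at $\eta$ by genuine cochain maps and a nullhomotopy (valid because bounded complexes of vector spaces are homotopy-projective, so derived Hom is computed by homotopy classes of chain maps), and spread the finitely many scalars and identities out to an open neighborhood; on an integral scheme this last step is especially clean since $\mathcal{O}_X(U) \hookrightarrow K$ for any nonempty open $U$, so an identity among matrix entries that holds in $K$ already holds on $U$ once the entries are defined there. Your approach buys self-containedness and makes the mechanism transparent; the paper's buys brevity, and the cited results are stronger (openness at every point and for higher levels, not just near $\eta$). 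Three small slips to fix, none fatal: $E_\eta$ is a \emph{retract} of, not isomorphic to, a finite sum of shifts of $G_\eta$ (your first and second paragraphs say ``isomorphism,'' though your third paragraph handles the retract correctly); you cannot take strictly perfect representatives, since $E$ and $G$ need not be perfect, but bounded complexes of coherent sheaves suffice; and for an open immersion $i$ the functor $\mathbf{L}i^\ast = i^\ast$ is just restriction, which is exact, so no generic freeness of the terms is actually needed to compute it.
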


\begin{proof}
    Consider a nonempty open affine $U$ in $X$. By \cite[Proposition 3.5]{Letz:2021}, the set $S$ of points $p$ in $U$ such that $E_p $ is in $ \langle G_p \rangle_1$ is Zariski open. Clearly, $S$ is nonempty as it contains the generic point of $X$. By the same reasoning, the set $T$ of points $p$ in $X$ such that $G_p $ is in $ \langle E_p \rangle_1$ is Zariski open and nonempty. Choose a nonempty open affine $W$ in $S\cap T$. Let $j \colon W\to X$ be the associated open immersion. From \cite[Corollary 3.4]{Letz:2021}, it follows that $\mathbf{L} j^\ast E$ is in $ \langle \mathbf{L} j^\ast G \rangle_1$ and $\mathbf{L} j^\ast G $ is in $ \langle \mathbf{L} j^\ast E \rangle_1$ because for each $p$ in $W$, one has $E_p $ is in $ \langle G_p \rangle_1$ and $G_p$ is in $ \langle E_p \rangle_1$.
\end{proof}

\begin{proposition}\label{prop:generate_generically_general}
    Let $X$ be a Noetherian scheme. Suppose $P\in \operatorname{Perf}(X)$ has full support. Then $D_{\operatorname{coh}}^b (X)$ is classically generated by $P$ and objects of the form $\mathbf{R} i_\ast \mathbf{L} i^\ast P$ where $i \colon Z \to X$ is a closed immersion from an integral scheme.
\end{proposition}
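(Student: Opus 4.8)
The plan is to reduce the statement to the coherent dévissage recalled just before it, and then to transfer that generation statement one integral closed subscheme at a time, using the generic comparison of \Cref{lem:derived_generic_free}. Write $\mathcal{G}$ for the proposed generating family, namely $P$ together with all $\mathbf{R}i_\ast\mathbf{L}i^\ast P$ for $i\colon Z\to X$ a closed immersion from an integral scheme. The containment $\langle\mathcal{G}\rangle\subseteq D^b_{\operatorname{coh}}(X)$ is automatic, and the coherent dévissage recalled above shows that the objects $i_\ast\mathcal{O}_Z$, with $Z$ integral closed, already classically generate $D^b_{\operatorname{coh}}(X)$ (they finitely build every coherent sheaf, in particular $\mathcal{O}_X$). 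Hence it suffices to prove that $i_\ast\mathcal{O}_Z\in\langle\mathcal{G}\rangle$ for every integral closed subscheme $i\colon Z\to X$. It is worth noting, via the projection formula, that $\mathbf{R}i_\ast\mathbf{L}i^\ast P\cong i_\ast\mathcal{O}_Z\otimes^{\mathbf{L}}P$, so that $\mathcal{G}$ is exactly the classical generators of the dévissage tensored with $P$; this reformulation is not strictly needed below but clarifies the role of the full-support hypothesis on $P$.

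I would argue by Noetherian induction on $Z$, which is legitimate since the closed subsets of $X$ satisfy the descending chain condition. Fix an integral closed $i\colon Z\to X$ and assume $i'_\ast\mathcal{O}_{Z'}\in\langle\mathcal{G}\rangle$ for every integral closed subscheme $Z'\subsetneq Z$. A first consequence I would record is that $\mathbf{R}i_\ast F\in\langle\mathcal{G}\rangle$ for every $F\in D^b_{\operatorname{coh}}(Z)$ whose support is a proper closed subset $W\subsetneq Z$: coherent dévissage inside $D^b_{\operatorname{coh}}(Z)$ finitely builds such an $F$ from pushforwards of $\mathcal{O}_{Z'}$ with $Z'\subseteq W$ integral, and since $i$ is a closed immersion $\mathbf{R}i_\ast=i_\ast$ is exact and triangulated, so it carries this build-up to one from the objects $(i\circ\iota_{Z'})_\ast\mathcal{O}_{Z'}$, each lying in $\langle\mathcal{G}\rangle$ by the inductive hypothesis.

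For the inductive step, set $Q:=\mathbf{L}i^\ast P$. Because $P$ is perfect with full support, $Q$ is perfect with $\operatorname{Supp}(Q)=i^{-1}\operatorname{Supp}(P)=Z$, so both $\mathcal{O}_Z$ and $Q$ are objects of $D^b_{\operatorname{coh}}(Z)$ with full support. \Cref{lem:derived_generic_free} then furnishes a nonempty open immersion $j\colon U\to Z$ with $\mathcal{O}_U=\mathbf{L}j^\ast\mathcal{O}_Z\in\langle\mathbf{L}j^\ast Q\rangle_1$; put $W:=Z\setminus U$, a proper closed subset of $Z$, and let $\mathcal{D}_W$ denote the thick subcategory of $D^b_{\operatorname{coh}}(Z)$ consisting of objects supported on $W$. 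Restriction along $j$ identifies $D^b_{\operatorname{coh}}(U)$ with the Verdier quotient $D^b_{\operatorname{coh}}(Z)/\mathcal{D}_W$, so the thick-subcategory correspondence for this quotient, together with $\mathcal{O}_U\in\langle\mathbf{L}j^\ast Q\rangle$, yields $\mathcal{O}_Z\in\langle Q,\mathcal{D}_W\rangle$. Applying the exact functor $i_\ast$ gives
\begin{displaymath}
    i_\ast\mathcal{O}_Z\in\langle i_\ast Q,\, i_\ast\mathcal{D}_W\rangle\subseteq\langle\mathcal{G}\rangle,
\end{displaymath}
since $i_\ast Q=\mathbf{R}i_\ast\mathbf{L}i^\ast P\in\mathcal{G}$ and $i_\ast\mathcal{D}_W\subseteq\langle\mathcal{G}\rangle$ by the consequence of the inductive hypothesis recorded above. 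This closes the induction and hence the proof.

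The main obstacle is the passage from the generic relation $\mathcal{O}_U\in\langle Q|_U\rangle_1$ to the global statement $\mathcal{O}_Z\in\langle Q,\mathcal{D}_W\rangle$. Naively one would try to lift the splitting idempotent exhibiting $\mathcal{O}_U$ as a retract of a finite sum of shifts of $Q|_U$ to an idempotent over $Z$, but idempotents need not lift along a Verdier localization. I would bypass the lifting entirely by invoking the standard correspondence between thick subcategories of $D^b_{\operatorname{coh}}(U)$ and thick subcategories of $D^b_{\operatorname{coh}}(Z)$ containing $\mathcal{D}_W$, which absorbs the retract automatically. The only points needing care are the identification of $D^b_{\operatorname{coh}}(U)$ with the quotient $D^b_{\operatorname{coh}}(Z)/\mathcal{D}_W$ (so that coherence and boundedness are preserved along the localization) and the exactness of pushforward along a closed immersion; both are standard for Noetherian schemes.
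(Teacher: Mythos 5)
Your proof is correct, but it takes a genuinely different route from the paper's. The paper runs a single Noetherian induction on closed subschemes of $X$ itself: it reduces an arbitrary object to its cohomology sheaves and then splits into three cases (support properly contained, handled via scheme-theoretic support and \cite[Remark 2.23]{Lank:2023}; full support with $X$ integral; full support with $X$ non-integral, handled by the irreducible-component exact sequence). In the integral case it applies \Cref{lem:derived_generic_free} to the pair $(E,P)$ for an \emph{arbitrary} coherent sheaf $E$ of full support, and it deals with the retract problem concretely: it extends the complementary summand $\mathbf{L}j^\ast A$ across $X$ and lifts the isomorphism $\mathbf{L}j^\ast A\oplus\mathbf{L}j^\ast E\cong\oplus_n\mathbf{L}j^\ast P^{\oplus r_n}[n]$ to a roof over $X$ whose two legs have cones supported on the complement, feeding those cones back into the induction. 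You instead reduce at the outset, by classical d\'{e}vissage, to the structure sheaves $i_\ast\mathcal{O}_Z$ of integral closed subschemes, run the Noetherian induction over those $Z$ only, apply \Cref{lem:derived_generic_free} to the single pair $(\mathcal{O}_Z,\mathbf{L}i^\ast P)$, and dispose of the retract issue abstractly through the correspondence between thick subcategories of $D^b_{\operatorname{coh}}(U)\simeq D^b_{\operatorname{coh}}(Z)/\mathcal{D}_W$ and thick subcategories of $D^b_{\operatorname{coh}}(Z)$ containing $\mathcal{D}_W$. Note that the two arguments consume essentially the same nontrivial background fact: restriction to an open subscheme exhibits $D^b_{\operatorname{coh}}(U)$ as a Verdier quotient of $D^b_{\operatorname{coh}}(Z)$ by the complement-supported objects (the paper invokes this to produce its roof and to extend the summand $\mathbf{L}j^\ast A$; you invoke it for the thick-subcategory correspondence), and in both write-ups this localization statement, while true for Noetherian schemes, is a genuine theorem that deserves a precise citation rather than the label ``standard.'' What your organization buys is a cleaner induction --- only structure sheaves of integral subschemes, no case analysis, no explicit roofs, and in fact full faithfulness of the induced functor $D^b_{\operatorname{coh}}(Z)/\mathcal{D}_W\to D^b_{\operatorname{coh}}(U)$ already suffices, since a fully faithful exact functor reflects membership in thick subcategories; what the paper's organization buys is that it never needs the d\'{e}vissage-with-supports statement as a separate ingredient (proper-support objects are absorbed by the scheme-theoretic support trick) and works directly with any object of $D^b_{\operatorname{coh}}(X)$ rather than routing everything through $i_\ast\mathcal{O}_Z$.
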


\begin{proof}
    Let $\mathcal{S}$ be the collection consisting of $P$ and objects of the form $\mathbf{R} i_\ast \mathbf{L} i^\ast P$ where $i \colon Z \to X$ is a closed immersion from an integral scheme. If $X=\emptyset$, then there is nothing to check, so without loss of generality, $X$ is nonempty. We argue by Noetherian induction. From \cite[Remark 2.23]{Lank:2023}, we may impose that $X$ is reduced and the desired claim holds for any properly contained closed subscheme $Z$ of $X$. Let $E$ be an object of $D^b_{\operatorname{coh}}(X)$. 
    
    If $E$ has support properly contained in $X$, then the induction hypothesis tells us that $E$ belongs to $\mathcal{S}$. Indeed, let $Z$ be the reduced induced closed subscheme structure on $\operatorname{supp}(E)$ with corresponding ideal sheaf $\mathcal{I}$. By \cite[Remark 2.23]{Lank:2023}, there is an integer $n$ such that $\mathcal{I}^n E=(0)$, which ensures $E$ is isomorphic to an object of the form $\mathbf{R} i_\ast E^\prime$ for $E^\prime$ in $D^b_{\operatorname{coh}}(Z_n)$ and $i\colon Z_n \to X$ the closed immersion associated to the ideal sheaf $\mathcal{I}^n$. From \cite[Lemma 3.3]{Thomason:1997}, we know $\mathbf{L} i^\ast P\in \operatorname{Perf}(Z_n)$ has full support. The induction hypothesis ensures that $\mathbf{R} i_\ast D^b_{\operatorname{coh}}(Z_n)$ belongs to $\mathbf{R} i_\ast \mathcal{S}^\prime$ because $D^b_{\operatorname{coh}}(Z_n)= \langle \mathcal{S}^\prime \rangle$ where $\mathcal{S}^\prime$ is the collection consisting of $\mathbf{L} i^\ast P$ and objects of the form $\mathbf{R} t_\ast \mathbf{L} (i\circ t)^\ast P$ where $t \colon Z^\prime \to Z_n$ is a closed immersion from an integral scheme. Note that $\mathbf{R} i_\ast \mathcal{S}^\prime$ is contained in $\mathcal{S}$, and so, it follows that $E$ belongs to $\langle \mathcal{S} \rangle$ as claimed. 
    
    We will now consider the case where $E$ has full support. Moreover, as objects in $D^b_{\operatorname{coh}}(X)$ are finitely built by their cohomology sheaves, we can impose that $E$ is a coherent $\mathcal{O}_X$-module. First, let us consider the case where $X$ is integral. We have that $E$ is a coherent $\mathcal{O}_X$-module with full support on the integral scheme $X$. By \Cref{lem:derived_generic_free}, there exists a dense open immersion $j \colon U \to X$ such that $\mathbf{L} j^\ast E \in \langle \mathbf{L} j^\ast P \rangle_1$ and $\mathbf{L} j^\ast G \in \langle \mathbf{L} j^\ast E \rangle_1$ in $D^b_{\operatorname{coh}}(U)$. If $U=X$, then we are done, so assume $U$ is a properly contained subset of $X$. Note that $\mathbf{L} j^\ast E$ is a direct summand of a bounded complex of the form $\oplus_{n\in \mathbb{Z}} \mathbf{L} j^\ast P^{\oplus r_n} [n]$, so we have an object $\mathbf{L} j^\ast A$ and an isomorphism of bounded complexes $\phi \colon \mathbf{L} j^\ast A \oplus \mathbf{L} j^\ast E \to \oplus_{n\in \mathbb{Z}} \mathbf{L} j^\ast P^{\oplus r_n} [n]$ in $D^b_{\operatorname{coh}}(U)$. There exists a roof in $D^b_{\operatorname{coh}}(X)$:
    \begin{displaymath}
        \begin{tikzcd}
            & B \\
            {\bigoplus_{n\in \mathbb{Z}} P^{\oplus r_n} [n]} && {A \oplus E}.
            \arrow["f", from=1-2, to=2-1]
            \arrow["g"', from=1-2, to=2-3]
        \end{tikzcd}
    \end{displaymath}
    Observe that the cones of $f,g$ are supported on the properly contained closed subset $Z$ of $X$ as the map $\mathbf{L} j^\ast \colon D^b_{\operatorname{coh}}(X) \to D^b_{\operatorname{coh}}(U)$ is a Verdier localization. This ensures the objects $\operatorname{cone}(f), \operatorname{cone}(g)$ are in $\mathcal{S}$ by our inductive hypothesis. Therefore, $B$ belongs to $\mathcal{S}$, and hence, so does $A\oplus E$.
    
    Lastly, we will consider the case where $X$ is not integral. Denote the irreducible components of $X=\operatorname{Supp}(E)$ by $Z_1,\ldots, Z_n$. From \cite[\href{https://stacks.math.columbia.edu/tag/01YD}{Tag 01YD}]{StacksProject}, there exists a short exact sequence of coherent $\mathcal{O}_X$-modules:
    \begin{displaymath}
        0 \to E_1 \to E \to E_1^\prime \to 0
    \end{displaymath}
    where $\operatorname{Supp}(E_1)\subseteq Z_1$ and $\operatorname{Supp}(E_1^\prime) \subseteq \cup^n_{i=2} Z_i$. Both objects $E_1$ and $E_1^\prime$ are supported on properly contained closed subsets of $X$, and so, each belong to $\langle \mathcal{S} \rangle$ according to the induction hypothesis. However, this implies that $E$ belongs to $\langle \mathcal{S} \rangle$, which completes the proof.
\end{proof}

\begin{theorem}\label{thm:devissage}
    Let $X$ be a Noetherian scheme. Suppose $P$ is a classical generator for $\operatorname{Perf}(X)$. Then $D^b_{\operatorname{coh}}(X)$ is classically generated by $P$ and the objects of the form $\mathbf{R} i_\ast \mathbf{L} i^\ast P$ where $i\colon Z \to X$ is a closed immersion from an integral scheme such that $i(Z)$ is contained in $\operatorname{sing}(X)$.
\end{theorem}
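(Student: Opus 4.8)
The plan is to deduce the statement from \Cref{prop:generate_generically_general} by showing that the extra generators indexed by integral subschemes meeting the regular locus are redundant. First I would record that $P$ has full support: since $P$ classically generates $\operatorname{Perf}(X)$ we have $\mathcal{O}_X \in \langle P \rangle$, and support is monotone under the thick-subcategory operations, so $X = \operatorname{Supp}(\mathcal{O}_X) \subseteq \operatorname{Supp}(P)$. Thus \Cref{prop:generate_generically_general} applies, and $D^b_{\operatorname{coh}}(X)$ is classically generated by $P$ together with all $\mathbf{R} i_\ast \mathbf{L} i^\ast P$ for integral closed immersions $i \colon Z \to X$. Writing $\mathcal{S}$ for the proposed family, it therefore suffices to prove that $\mathbf{R} i_\ast \mathbf{L} i^\ast P \in \langle \mathcal{S} \rangle$ for every such $Z$, including those with $i(Z) \not\subseteq \operatorname{sing}(X)$.

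Two inputs would drive the argument. The first is that everything supported on the singular locus is already available: for any closed $Y \subseteq \operatorname{sing}(X)$ and any $C \in D^b_{\operatorname{coh}}(X)$ with $\operatorname{Supp}(C) \subseteq Y$, one has $\mathcal{I}_Y^n C = 0$ for some $n$, so $C \cong \mathbf{R} a_\ast C'$ for the thickening $a \colon Y_n \to X$; applying \Cref{prop:generate_generically_general} to $Y_n$ with the full-support perfect complex $\mathbf{L} a^\ast P$ and pushing forward along $\mathbf{R} a_\ast$ expresses $C$ through objects $\mathbf{R}(a \circ t)_\ast \mathbf{L}(a \circ t)^\ast P$ with $t$ integral, whose images lie in $Y \subseteq \operatorname{sing}(X)$; hence $C \in \langle \mathcal{S} \rangle$. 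The second input is that the candidate generators are generically perfect: the projection formula gives $\mathbf{R} i_\ast \mathbf{L} i^\ast P \cong P \otimes^{\mathbf{L}} \mathbf{R} i_\ast \mathcal{O}_Z$, and over any point of $\operatorname{reg}(X) := X \setminus \operatorname{sing}(X)$ the local ring is regular, so the bounded complex $\mathbf{R} i_\ast \mathcal{O}_Z$ has finite projective dimension there; tensoring with the perfect complex $P$ shows $\mathbf{R} i_\ast \mathbf{L} i^\ast P$ is perfect at every point of $\operatorname{reg}(X)$.

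With these in hand, fix integral $Z$ and put $F = \mathbf{R} i_\ast \mathbf{L} i^\ast P$. The locus $U$ where $F$ is perfect is open, and by the second input it contains $\operatorname{reg}(X)$; consequently $X \setminus U$ is a closed subset contained in $\operatorname{sing}(X)$. Over $U$ the restriction $F|_U$ is perfect and $P|_U$ classically generates $\operatorname{Perf}(U)$ — the restriction of a perfect generator along an open immersion is again one, since perfect complexes on $U$ extend, up to a direct summand, to perfect complexes on $X$. The open immersion $j \colon U \to X$ realizes $D^b_{\operatorname{coh}}(U)$ as the Verdier quotient of $D^b_{\operatorname{coh}}(X)$ by the subcategory $\mathcal{K}$ of objects supported on $X \setminus U$; since $j^\ast F \in \langle j^\ast P \rangle = \operatorname{Perf}(U)$, the standard lifting property of Verdier quotients gives $F \in \langle P, \mathcal{K} \rangle$. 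By the first input (applied to $Y = X \setminus U \subseteq \operatorname{sing}(X)$) every object of $\mathcal{K}$ lies in $\langle \mathcal{S} \rangle$, and $P \in \langle \mathcal{S} \rangle$; therefore $F \in \langle \mathcal{S} \rangle$, which is what remained to be shown.

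The main obstacle I anticipate is the lifting step through the localization $j^\ast$: one must upgrade the purely formal membership $j^\ast F \in \langle j^\ast P \rangle$ over the open $U$ to the global conclusion $F \in \langle P, \mathcal{K} \rangle$, while keeping the error term $\mathcal{K}$ supported inside $\operatorname{sing}(X)$. The key device that makes this run for an \emph{arbitrary} Noetherian scheme — where $\operatorname{reg}(X)$ need not be open — is to localize at the perfect locus $U$ of the specific object $F$ rather than at the regular locus: because $U \supseteq \operatorname{reg}(X)$ automatically, its complement is forced into $\operatorname{sing}(X)$, and the singular-locus input then absorbs the error. A secondary point to check carefully is the bookkeeping in the first input, namely that pushforwards of the generators produced by \Cref{prop:generate_generically_general} on a thickening genuinely land among the integral-subscheme generators comprising $\mathcal{S}$; this is the same mechanism already used in the proof of \Cref{prop:generate_generically_general}.
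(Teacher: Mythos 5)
Your argument is genuinely different from the paper's, and its overall architecture is sound. The paper's proof does not route through \Cref{prop:generate_generically_general} at all: it covers $X$ by affine opens, invokes Takahashi's affine theorem \cite[Corollary 4.3.(1)]{Takahashi:2014} on each piece, and then globalizes using Koszul complexes and the local-to-global criterion \cite[Theorem 1.7]{BILMP:2023}, finishing with the projection formula. You instead deduce the theorem from \Cref{prop:generate_generically_general} by showing that the generators attached to integral subschemes meeting the regular locus are redundant. Your two key ideas are correct: (i) localizing at the open perfect locus $U$ of the specific object $F=\mathbf{R}i_\ast\mathbf{L}i^\ast P$ --- which contains $\operatorname{reg}(X)$ by the projection formula and regularity of the local rings there --- rather than at $\operatorname{reg}(X)$ itself, which need not be open for a general Noetherian scheme; and (ii) the lifting through the localization $\mathbf{L}j^\ast$: the paper itself uses that $\mathbf{L}j^\ast\colon D^b_{\operatorname{coh}}(X)\to D^b_{\operatorname{coh}}(U)$ is a Verdier localization whose kernel $\mathcal{K}$ consists of the objects supported on $X\setminus U$, and your claim $F\in\langle P,\mathcal{K}\rangle$ follows by factoring $D^b_{\operatorname{coh}}(X)\to D^b_{\operatorname{coh}}(X)/\langle P,\mathcal{K}\rangle$ through the quotient by $\mathcal{K}$, since the kernel of the quotient by a thick subcategory is exactly that subcategory. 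Your proof also isolates correctly where classical generation of $\operatorname{Perf}(X)$ (rather than mere full support) enters, consistent with \Cref{rmk:compare_proofs_singular}. What each route buys: yours is self-contained relative to the paper (no \cite{Takahashi:2014}, no \cite{BILMP:2023}); the paper's is shorter given those external inputs.

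There is, however, one genuine (though reparable) gap, exactly at the spot you flagged: the ``first input.'' Applying \Cref{prop:generate_generically_general} to the thickening $a\colon Y_n\to X$ yields as generators not only the objects $\mathbf{R}(a\circ t)_\ast\mathbf{L}(a\circ t)^\ast P$ with $t$ integral, but also $\mathbf{L}a^\ast P$ itself, and its pushforward $\mathbf{R}a_\ast\mathbf{L}a^\ast P$ is \emph{not} of the form allowed in $\mathcal{S}$, because $Y_n$ is in general neither reduced nor irreducible. That object is merely supported inside $\operatorname{sing}(X)$, so absorbing it by appealing to the first input would be circular. The repair is to avoid the thickening altogether: $C$ is finitely built by its cohomology sheaves, and each $\mathcal{H}^j(C)$ is a coherent sheaf supported on $Y$, so by \cite[\href{https://stacks.math.columbia.edu/tag/01YF}{Tag 01YF}]{StacksProject} it admits a finite filtration whose graded pieces are $b_\ast\mathcal{G}$ with $b\colon Z\to X$ a closed immersion from an \emph{integral} scheme, $\mathcal{G}\in\operatorname{coh}(Z)$, and necessarily $Z\subseteq\operatorname{Supp}(\mathcal{H}^j(C))\subseteq Y$. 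It thus suffices to place each $b_\ast\mathcal{G}$ in $\langle\mathcal{S}\rangle$. Now apply \Cref{prop:generate_generically_general} on the integral scheme $Z$ to the perfect complex $\mathbf{L}b^\ast P$ (which has full support by \cite[Lemma 3.3]{Thomason:1997}) and push forward along $\mathbf{R}b_\ast$: every resulting generator, namely $\mathbf{R}b_\ast\mathbf{L}b^\ast P$ and the objects $\mathbf{R}(b\circ t)_\ast\mathbf{L}(b\circ t)^\ast P$, now does lie in $\mathcal{S}$, since $b$ and $b\circ t$ are closed immersions from integral schemes with image in $\operatorname{sing}(X)$. With this substitution your first input holds, and the rest of your argument goes through as written.
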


\begin{proof}
    Denote by $\mathcal{U}$ for the collection of objects consisting of $P$ and those of the form $\mathbf{R} i_\ast \mathbf{L} i^\ast P$ where $i\colon Z \to X$ a closed immersion from an integral scheme such that $i(Z)$ is contained in $\operatorname{sing}(X)$. We will show that $\langle \mathcal{U} \rangle = D^b_{\operatorname{coh}}(X)$. By \cite[\href{https://stacks.math.columbia.edu/tag/0BQT}{Tag 0BQT}]{StacksProject}, we can see that $\mathcal{O}_Z$ is finitely built by $\mathbf{L} t^\ast P$ for each closed immersion $t\colon Z \to X$. So $\mathbf{R} t_\ast \mathcal{O}_Z$ is finitely built by $\mathbf{R} t_\ast \mathbf{L} t^\ast P$. This ensures that $\mathbf{R} i_\ast \mathcal{O}_Z$ is contained in $\langle \mathcal{U} \rangle$ for each closed immersion from an integral scheme such that $i(Z)$ is contained in $\operatorname{sing}(X)$. 
    
    Consider an affine open cover $X=\bigcup^n_{i=1} U_i$ with $s_i \colon U_i \to X$ being the associated open immersions. Note that for each $1\leq i \leq n$, one has $\langle \mathbf{L} s^\ast_i \mathcal{U} \rangle$ containing all objects of the form $p_\ast \mathcal{O}_W$ where $p\colon W \to U_i$ is a closed immersion where $W$ such that $p(W)$ is contained in $\operatorname{sing}(U_i)$, including the structure sheaf. From \cite[Corollary 4.3.(1)]{Takahashi:2014}, it follows that $\langle \mathbf{L} s^\ast_i \mathcal{U} \rangle = D^b_{\operatorname{coh}}(U_i)$ because $U_i$ is affine for each $1\leq i \leq n$. 
    
    Let $E$ be an object of $D^b_{\operatorname{coh}}(X)$. There is an object $G_i$ in $\langle \mathcal{U} \rangle_1$ such that $\mathbf{L} s^\ast_i E$ is finitely built by $\mathbf{L} s^\ast_i G_i$ in $D^b_{\operatorname{coh}}(U_i)$ because $\langle \mathbf{L} s^\ast_i \mathcal{U} \rangle = D^b_{\operatorname{coh}}(U_i)$. Set $G=\oplus^n_{j=1} G_i$. It follows that $\mathbf{L} s^\ast_j E$ is finitely built by $\mathbf{L} s^\ast_j G$ in $D^b_{\operatorname{coh}}(U_j)$ for each $1\leq j \leq n$. This ensures that $E_p$ is finitely built by $G_p$ in $D^b_{\operatorname{coh}}(\mathcal{O}_{X,p})$ for each $p$ in $X$. Hence, $E_p \overset{\mathbf{L}}{\otimes} K(p)$ is finitely built by $G_p$ in $D^b_{\operatorname{coh}}(\mathcal{O}_{X,p})$ for each $p$ in $X$ where $K(p)$ is the Koszul complex on a minimal set of generators for the maximal ideal of the local ring $\mathcal{O}_{X,p}$. Indeed, $K(p)$ is finitely built by $\mathcal{O}_{X,p}$, and tensoring with $E_p$ tells us that $E_p \overset{\mathbf{L}}{\otimes} K(p)$ is finitely built by $E_p$.
    
    By \cite[Theorem 1.7]{BILMP:2023}, if $E_p \overset{\mathbf{L}}{\otimes} K(p)$ is finitely built by $G_p$ in $D^b_{\operatorname{coh}}(\mathcal{O}_{X,p})$ for each $p$ in $X$, then $E$ is finitely built by $P\overset{\mathbf{L}}{\otimes}G$ in $D^b_{\operatorname{coh}}(X)$. As $G$ belongs to $\langle \mathcal{U} \rangle_1$, it is a direct summand of an object of the form $\oplus_{l\in \mathbb{Z}} (\mathbf{R} i_{l,\ast} \mathbf{L} i^\ast_l P )^{\oplus r_l}[l]$. By tensoring with $P$ and using projection formula, it follows that $P \overset{\mathbf{L}}{\otimes} \big(\oplus_{l\in \mathbb{Z}} ( \mathbf{R} i_{l,\ast} \mathbf{L} i^\ast_l P )^{\oplus r_l}[l])$ is isomorphic to $\oplus_{l\in \mathbb{Z}} ( \mathbf{R} i_{l,\ast} \mathbf{L} i^\ast_l (P\overset{\mathbf{L}}{\otimes}P ))^{\oplus r_l}[l]$. Note that $P\overset{\mathbf{L}}{\otimes} G$ is a direct summand of $\oplus_{l\in \mathbb{Z}} (\mathbf{R} i_{l,\ast} \mathbf{L} i^\ast_l (P\overset{\mathbf{L}}{\otimes}P ))^{\oplus r_l}[l]$. As $P\overset{\mathbf{L}}{\otimes}P$ is a classical generator for $\operatorname{Perf}(X)$, we see that $\oplus_{l\in \mathbb{Z}} (\mathbf{R} i_{l,\ast} \mathbf{L} i^\ast_l (P\overset{\mathbf{L}}{\otimes}P ))^{\oplus r_l}[l]$ is finitely built by $\oplus_{l\in \mathbb{Z}} (\mathbf{R} i_{l,\ast} \mathbf{L} i^\ast_l P)^{\oplus r_l}[l]$. Therefore, $E$ must be contained in $\langle \mathcal{U} \rangle$.
\end{proof}

\begin{remark}\label{rmk:compare_proofs_singular}
    It is important to assume $P$ is a classical generator for $\operatorname{Perf}(X)$ in \Cref{thm:devissage}. A possible relaxation might be imposing $P$ be a perfect complex on $X$ with full support, but there are counterexamples. For example, $P=\mathcal{O}_X$ where $X=\mathbb{P}^1_k$ for $k$ a field because $\mathcal{O}_X$ is not a strong generator for $D^b_{\operatorname{coh}}(\mathbb{P}^1_k)$ and the singular locus is empty. 
\end{remark}

A useful application of \Cref{thm:devissage} is when the singular locus is finite.

\begin{example}\label{ex:surface_generator}
    Let $X$ be an integral Noetherian scheme with at worst finitely many isolated singularities. Then the singular locus of $X$ is finite, say with closed points $p_1,\ldots,p_n$ and associated closed immersions $\pi_i \colon \operatorname{Spec}(\kappa(p_i)) \to X$. Suppose $P\in \operatorname{Perf}(X)$ is a classical generator for $\operatorname{Perf}(X)$. Then 
    \begin{displaymath}
        P \oplus (\oplus^n_{i=1} \pi_{i,\ast} \mathcal{O}_{\operatorname{Spec}(\kappa(p_i))})
    \end{displaymath}
    is a classical generator for $D^b_{\operatorname{coh}}(X)$. There are two cases where $P$ can be made more precise:
    \begin{enumerate}
        \item If $X$ is a quasi-projective variety over a field with very ample line bundle $\mathcal{L}$, then one can take $P=\oplus^{\dim X}_{i=0} \mathcal{L}^{\otimes i}$ (see e.g.\ \cite[Theorem 4]{Orlov:2009}).
        \item Suppose $X$ is a scheme that is projective over a Noetherian ring $R$. Denote the closed immersion by $t\colon X \to \mathbb{P}^N_R$. Set $\mathcal{L}$ to be the line bundle $t^\ast \mathcal{O}_{\mathbb{P}^n_R} (-1)$. By \cite[\href{https://stacks.math.columbia.edu/tag/0A9V}{Tag 0A9V}]{StacksProject}, we know that $\oplus^N_{i=0} \mathcal{O}_{\mathbb{P}^n_R} (-1)$ is a classical generator for $\operatorname{Perf}(\mathbb{P}^n_R)$. However, the closed immersion $i$ is an affine morphism, so \cite[\href{https://stacks.math.columbia.edu/tag/0BQT}{Tag0BQT}]{StacksProject} tells us that $i^\ast (\oplus^N_{i=0} \mathcal{O}_{\mathbb{P}^n_R} (-1))$ is a classical generator for $\operatorname{Perf}(X)$.
    \end{enumerate}
\end{example}

Now we work towards the rest. Initially, we showed that $\langle \mathbf{R}\pi_\ast D^b_{\operatorname{coh}}(Y) \rangle = D^b_{\operatorname{coh}}(X)$ for $\pi\colon Y \to X$ a proper surjective morphism by Noetherian induction coupled with \Cref{thm:devissage}. But we later realized how to refine this.

\begin{lemma}\label{lem:boundedness}
    Let $\pi \colon Y \to X$ be a proper surjective morphism of Noetherian schemes. For every object $K$ in $\mathbf{R}\pi_\ast D^{-}_{\operatorname{coh}}(Y)$ and integer $a$, there is a map $f \colon K \to L$ with $L$ in $\mathbf{R}\pi_\ast D^b_{\operatorname{coh}}(Y)$ such that $\operatorname{cone}(f)$ is in $D^{\leq a}_{\operatorname{Qcoh}}(X)$.
\end{lemma}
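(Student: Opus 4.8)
The plan is to prove Lemma~\ref{lem:boundedness} by reducing an arbitrary object of $\mathbf{R}\pi_\ast D^{-}_{\operatorname{coh}}(Y)$ to one coming from a genuinely bounded complex on $Y$, using that $\mathbf{R}\pi_\ast$ has finite cohomological amplitude for a proper morphism of Noetherian schemes. First I would fix $K = \mathbf{R}\pi_\ast M$ with $M \in D^{-}_{\operatorname{coh}}(Y)$ and an integer $a$. The key input is that properness gives a uniform bound: there is an integer $d \geq 0$ (depending only on $\pi$) such that $\mathbf{R}\pi_\ast$ carries $D^{\leq b}_{\operatorname{coh}}(Y)$ into $D^{\leq b+d}_{\operatorname{Qcoh}}(X)$ for every $b$. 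This is the finiteness of the cohomological dimension of $\mathbf{R}\pi_\ast$, valid because $\pi$ is proper (hence of finite type and separated) between Noetherian schemes.

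The second step is a stupid truncation on $Y$. Since $M \in D^{-}_{\operatorname{coh}}(Y)$ is bounded above, I would choose $b \leq a - d$ and consider the truncation triangle on $Y$
\begin{displaymath}
    \sigma^{\geq b} M \to M \to \sigma^{< b} M \to (\sigma^{\geq b} M)[1],
\end{displaymath}
where $N \colonequals \sigma^{\geq b} M$ is now bounded (bounded above because $M$ is, bounded below by construction), so $N \in D^b_{\operatorname{coh}}(Y)$, while $\sigma^{< b} M \in D^{< b}_{\operatorname{coh}}(Y)$. Applying the exact functor $\mathbf{R}\pi_\ast$ yields a triangle
\begin{displaymath}
    \mathbf{R}\pi_\ast N \to K \to \mathbf{R}\pi_\ast \sigma^{< b} M \to (\mathbf{R}\pi_\ast N)[1].
\end{displaymath}
I would then rotate this triangle to read off a map $f \colon K \to L$ with $L \colonequals \mathbf{R}\pi_\ast N \in \mathbf{R}\pi_\ast D^b_{\operatorname{coh}}(Y)$. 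The point is to identify $\operatorname{cone}(f)$: from the rotated triangle, the cone of the comparison map $K \to \mathbf{R}\pi_\ast N$ is (up to shift) $\mathbf{R}\pi_\ast \sigma^{< b} M$.

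The third step is the amplitude estimate applied to the cone. Because $\sigma^{< b} M \in D^{< b}_{\operatorname{coh}}(Y) \subseteq D^{\leq b-1}_{\operatorname{coh}}(Y)$ and $\mathbf{R}\pi_\ast$ raises cohomological degree by at most $d$, we get $\mathbf{R}\pi_\ast \sigma^{< b} M \in D^{\leq b-1+d}_{\operatorname{Qcoh}}(X) \subseteq D^{\leq a-1}_{\operatorname{Qcoh}}(X)$ by the choice $b \leq a-d$. After accounting for the degree shift coming from the rotation (which only improves or preserves the upper bound relative to $a$; one simply chooses $b$ small enough, e.g.\ $b \leq a-d$, to absorb the shift), $\operatorname{cone}(f)$ lands in $D^{\leq a}_{\operatorname{Qcoh}}(X)$, as required. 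I would be careful here to track the indexing convention so the final bound is exactly $\leq a$ rather than $\leq a-1$; adjusting the threshold $b$ by the constant $d$ plus the shift makes this bookkeeping routine.

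The main obstacle I anticipate is not any single deep step but rather \emph{pinning down the constant $d$ and justifying the uniform amplitude bound} cleanly in the global (non-affine) setting. On an affine base the bound is the classical statement that higher direct images vanish above the relative dimension plus the cohomological dimension of the fibers; globally one needs that $\pi$ proper between Noetherian schemes has finite cohomological dimension, which can be checked on a finite affine cover of $X$ and a finite affine cover of the preimages, taking $d$ to be the maximum of the finitely many local bounds. A secondary subtlety is that the cone of $f$ is only asserted to lie in $D^{\leq a}_{\operatorname{Qcoh}}(X)$ rather than in $D^{\leq a}_{\operatorname{coh}}(X)$; this is exactly why the statement is phrased with quasi-coherent cohomology, and it frees me from having to control coherence of the (possibly unbounded-below) tail $\sigma^{< b} M$ under pushforward. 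Once the uniform bound $d$ is in hand, the truncation-and-pushforward argument above is essentially formal.
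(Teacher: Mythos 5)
Your overall strategy --- truncate on $Y$ and exploit the finite cohomological dimension of $\mathbf{R}\pi_\ast$ for a proper morphism of Noetherian schemes --- is exactly the paper's (its proof simply takes $L = \mathbf{R}\pi_\ast(\tau_{\geq a-N}E)$ for a suitable integer $N$). But your execution has a genuine gap: you use the \emph{brutal} truncation $\sigma^{\geq b}$, and this produces the map in the wrong direction. Since the differential raises degree, $\sigma^{\geq b}M$ is a \emph{subcomplex} of $M$, so in your triangle $\sigma^{\geq b}M \to M \to \sigma^{<b}M \to (\sigma^{\geq b}M)[1]$ the natural map runs from the bounded piece into $M$; after applying $\mathbf{R}\pi_\ast$ you have a map $L \to K$, not $K \to L$. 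Your claim that rotating yields a ``comparison map $K \to \mathbf{R}\pi_\ast N$'' with cone $\mathbf{R}\pi_\ast \sigma^{<b}M$ is false: writing the pushed-forward triangle as $L \to K \to C \to L[1]$, its rotations only furnish the maps $K \to C$ (with cone $L[1]$) and $C \to L[1]$ (with cone $K[1]$); no rotation produces a morphism $K \to L$, and the composite $K \to C \to L[1]$ vanishes because consecutive maps in a distinguished triangle compose to zero. The direction is not cosmetic: the statement demands a map \emph{out of} $K$ with cone in low degrees, and this is precisely what the downstream arguments use (e.g.\ in \Cref{lem:truncate_down_to_bounded}, where a retraction $K \to E$ must factor through $L$ for degree reasons).

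The repair is to use the canonical truncation, which is what the paper does: the natural map $M \to \tau_{\geq b}M$ has cone $(\tau_{<b}M)[1]$, and $\tau_{\geq b}M$ lies in $D^b_{\operatorname{coh}}(Y)$ automatically, since its cohomology sheaves are those of $M$ in degrees $\geq b$. Applying $\mathbf{R}\pi_\ast$ gives $f \colon K \to L := \mathbf{R}\pi_\ast(\tau_{\geq b}M)$ with $\operatorname{cone}(f) \cong \mathbf{R}\pi_\ast\big((\tau_{<b}M)[1]\big)$, and your amplitude estimate --- which is correct as stated: there is a uniform $d$ with $\mathbf{R}\pi_\ast D^{\leq c}_{\operatorname{Qcoh}}(Y) \subseteq D^{\leq c+d}_{\operatorname{Qcoh}}(X)$ --- places the cone in $D^{\leq a}_{\operatorname{Qcoh}}(X)$ once $b$ is chosen at most $a-d$ (adjusting by $1$ for the shift). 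Note also a secondary defect of the brutal truncation that the canonical one avoids: $\sigma^{\geq b}M$ has coherent cohomology only after first replacing $M$ by an honest bounded-above complex of coherent sheaves, since its degree-$b$ cohomology is $\ker d^b$, which is not a cohomology sheaf of $M$. With $\tau$ in place of $\sigma$, the rest of your argument goes through and coincides with the paper's proof.
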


\begin{proof}
    Let $K:=\mathbf{R}\pi_\ast E$. Then $L=\mathbf{R}\pi_\ast (\tau_{\geq a-N} E)$ fits the bill for some integer $N$.
\end{proof}

\begin{lemma}\label{lem:factoring_n_fold_extensions}
    Let $\pi \colon Y \to X$ be a proper surjective morphism of Noetherian schemes. Let $n\geq 0$. For each $K$ in $D^{-}_{\operatorname{coh}}(X)$ which belongs to $\big(\mathbf{R}\pi_\ast D^{-}_{\operatorname{coh}}(Y) \big)^{\star n}$ and integer $a$, there is a map $f\colon K \to L$ with $L$ an object of $\big(\mathbf{R}\pi_\ast D^b_{\operatorname{coh}}(Y) \big)^{\star n}$ such that $\operatorname{cone}(f)$ in $D^{<a}_{\operatorname{Qcoh}}(X)$.
\end{lemma}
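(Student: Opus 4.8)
The plan is to induct on $n$, using \Cref{lem:boundedness} as the engine and the associativity of the operation $\star$ to peel off one factor at a time. The cases $n=0$ (where $K\simeq 0$, so $f=0$ works) and $n=1$ are immediate: for $n=1$ the object $K$ lies in $\mathbf{R}\pi_\ast D^{-}_{\operatorname{coh}}(Y)$, so applying \Cref{lem:boundedness} with bound $a-1$ yields $f\colon K\to L$ with $L\in\mathbf{R}\pi_\ast D^b_{\operatorname{coh}}(Y)=\big(\mathbf{R}\pi_\ast D^b_{\operatorname{coh}}(Y)\big)^{\star 1}$ and $\operatorname{cone}(f)\in D^{\leq a-1}_{\operatorname{Qcoh}}(X)=D^{<a}_{\operatorname{Qcoh}}(X)$.

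For the inductive step, write $\mathcal{S}:=\mathbf{R}\pi_\ast D^{-}_{\operatorname{coh}}(Y)$. Since $\star$ is associative, $\mathcal{S}^{\star n}=\mathcal{S}^{\star(n-1)}\star\mathcal{S}$, so $K$ sits in a distinguished triangle $A\to K\to B\xrightarrow{w} A[1]$ with $A\in\mathcal{S}^{\star(n-1)}$ and $B\in\mathcal{S}$. Because proper pushforward preserves bounded-above coherence and $D^{-}_{\operatorname{coh}}(X)$ is closed under extensions, both $A$ and $B$ lie in $D^{-}_{\operatorname{coh}}(X)$. I would apply the induction hypothesis to $A$ with the integer $a$ to get $f_A\colon A\to L_A$ with $L_A\in\big(\mathbf{R}\pi_\ast D^b_{\operatorname{coh}}(Y)\big)^{\star(n-1)}$ and $\operatorname{cone}(f_A)\in D^{<a}_{\operatorname{Qcoh}}(X)$; here $L_A$ is bounded, say $L_A\in D^{\geq b}_{\operatorname{coh}}(X)$, because $\mathbf{R}\pi_\ast$ carries $D^b_{\operatorname{coh}}(Y)$ into $D^b_{\operatorname{coh}}(X)$ and a $\star$-power of bounded objects is bounded. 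Then I would apply \Cref{lem:boundedness} to $B$ with a sufficiently negative bound, obtaining $f_B\colon B\to L_B$ with $L_B\in\mathbf{R}\pi_\ast D^b_{\operatorname{coh}}(Y)$ and $\operatorname{cone}(f_B)\in D^{<c}_{\operatorname{Qcoh}}(X)$ for any prescribed $c$; we will take $c\leq a$ and $c<b-1$.

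The crux — and the step I expect to be the main obstacle — is to assemble $f_A$ and $f_B$ into a single map out of $K$. I would build the target by the TR3/octahedral formalism: the composite $f_A[1]\circ w\colon B\to L_A[1]$ must be factored as $v\circ f_B$ for some $v\colon L_B\to L_A[1]$. The obstruction to this factorization lies in $\operatorname{Hom}(\operatorname{cone}(f_B)[-1],L_A[1])$, and this group vanishes once $\operatorname{cone}(f_B)$ is concentrated in degrees low enough relative to the bounded-below object $L_A$: with $\operatorname{cone}(f_B)[-1]\in D^{\leq c}_{\operatorname{Qcoh}}(X)$ and $L_A[1]\in D^{\geq b-1}_{\operatorname{coh}}(X)$, the standard orthogonality $\operatorname{Hom}(D^{\leq c},D^{\geq b-1})=0$ applies whenever $c<b-1$. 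This is exactly where \Cref{lem:boundedness} earns its keep and where Aoki's boundedness trick is adapted. Having fixed such a $v$, define $L$ by the triangle $L_A\to L\to L_B\xrightarrow{v}L_A[1]$, so that $L\in\big(\mathbf{R}\pi_\ast D^b_{\operatorname{coh}}(Y)\big)^{\star(n-1)}\star\big(\mathbf{R}\pi_\ast D^b_{\operatorname{coh}}(Y)\big)=\big(\mathbf{R}\pi_\ast D^b_{\operatorname{coh}}(Y)\big)^{\star n}$.

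Finally, TR3 completes the commuting square determined by $w$, $v$, $f_B$, $f_A[1]$ into a morphism of distinguished triangles from $A\to K\to B\to A[1]$ to $L_A\to L\to L_B\to L_A[1]$, producing the desired $f\colon K\to L$. The cones then fit into a distinguished triangle $\operatorname{cone}(f_A)\to\operatorname{cone}(f)\to\operatorname{cone}(f_B)\to\operatorname{cone}(f_A)[1]$. Since both $\operatorname{cone}(f_A)$ and $\operatorname{cone}(f_B)$ lie in $D^{<a}_{\operatorname{Qcoh}}(X)$ (using $c\leq a$), and $D^{<a}_{\operatorname{Qcoh}}(X)$ is closed under extensions, we conclude $\operatorname{cone}(f)\in D^{<a}_{\operatorname{Qcoh}}(X)$, completing the induction.
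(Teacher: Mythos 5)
Your proof is correct and follows essentially the same route as the paper's: approximate the constituents of the triangle using \Cref{lem:boundedness}, kill the obstruction to compatibility by placing the second approximation's cone sufficiently far down in degree (t-structure orthogonality), and complete to a morphism of distinguished triangles whose cone is then controlled by extension-closure of $D^{<a}_{\operatorname{Qcoh}}(X)$. The paper presents the same mechanism in rotated form (writing $K$ as a cone of a map $A \to B$ and factoring $A \to B'$ through a good approximation $A'$, rather than factoring the connecting map through $L_B$) and only sketches the one-cone case, so your explicit induction and identification of the obstruction group supply precisely the details the paper omits.
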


\begin{proof}
    The case where $n=0$ follows from \Cref{lem:boundedness}. We check the case for $n=1$, leaving the general case omitted as it follows similarly. There is a distinguished triangle in $D_{\operatorname{Qcoh}}(X)$:
    \begin{displaymath}
        A \to B \to K \to A[1]
    \end{displaymath}
    where $A,B$ are objects of $\mathbf{R}\pi_\ast D^{-}_{\operatorname{coh}}(Y)$. Then \Cref{lem:boundedness} ensures there is a commutative square:
    \begin{displaymath}
        \begin{tikzcd}[ampersand replacement=\&]
            A \& B \\
            {A^\prime} \& {B^\prime}
            \arrow[from=1-1, to=1-2]
            \arrow[from=1-1, to=2-1]
            \arrow[from=1-2, to=2-2]
            \arrow[from=2-1, to=2-2]
        \end{tikzcd}
    \end{displaymath}
    where the vertical maps have cones in $D^{<a-1}_{\operatorname{Qcoh}}(X)$. This can be accomplished by choosing a map $B \to B^\prime$ via \Cref{lem:boundedness}, and a map $A\to A^\prime$ a sufficiently good approximation for which $A \to B^\prime$ factors through $A^\prime$. Now extending the square above to a morphism of distinguished triangles, the base case follows.
\end{proof}

\begin{lemma}\label{lem:truncate_down_to_bounded}
    Let $\pi \colon Y \to X$ be a proper surjective morphism of Noetherian schemes. If $E$ is an object of $D^b_{\operatorname{coh}}(X)\cap \langle \mathbf{R}\pi_\ast D^{-}_{\operatorname{coh}}(Y) \rangle_n$, then $E$ belongs to $\langle \mathbf{R}\pi_\ast D^b_{\operatorname{coh}}(Y) \rangle_n$. 
\end{lemma}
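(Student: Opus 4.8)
The plan is to pass from the cone filtration to the $\star$-description of $\langle-\rangle_n$, use \Cref{lem:factoring_n_fold_extensions} to approximate by a \emph{bounded} object, and then exploit the boundedness of $E$ to lift a retraction through the approximating map. Write $\mathcal{A} = \mathbf{R}\pi_\ast D^{-}_{\operatorname{coh}}(Y)$ and $\mathcal{B} = \mathbf{R}\pi_\ast D^b_{\operatorname{coh}}(Y)$. Since $\mathbf{R}\pi_\ast$ commutes with shifts and finite coproducts and $D^{-}_{\operatorname{coh}}(Y)$ is closed under both, so is $\mathcal{A}$; likewise for $\mathcal{B}$. Invoking the standard comparison between the cone filtration of \Cref{def:thick} and iterated $\star$-products (namely $\langle \mathcal{A} \rangle_n = \operatorname{add}(\mathcal{A}^{\star n})$ for a subcategory closed under shifts and finite coproducts), the hypothesis $E \in \langle \mathcal{A} \rangle_n$ produces an object $K \in \mathcal{A}^{\star n}$ and maps $\iota \colon E \to K$, $p \colon K \to E$ with $p\iota = \operatorname{id}_E$. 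Because proper pushforward carries bounded-above coherent complexes to bounded-above coherent complexes and $\mathcal{A}^{\star n}\subseteq D^{-}_{\operatorname{coh}}(X)$ (extensions of bounded-above objects stay bounded above), the object $K$ lies in $D^{-}_{\operatorname{coh}}(X)$, so \Cref{lem:factoring_n_fold_extensions} is applicable to it.

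Next I would fix an integer $a$ strictly below the bottom of the cohomological support of $E$, so that $E \in D^{\geq a+1}_{\operatorname{coh}}(X)$ (the case $E=0$ being trivial). Applying \Cref{lem:factoring_n_fold_extensions} to $K$ and this $a$ yields a map $f \colon K \to L$ with $L \in \mathcal{B}^{\star n}$ and $C := \operatorname{cone}(f) \in D^{<a}_{\operatorname{Qcoh}}(X)$. Completing to a distinguished triangle $C[-1] \xrightarrow{\delta} K \xrightarrow{f} L \to C$ and applying $\operatorname{Hom}(-,E)$ gives the exact sequence $\operatorname{Hom}(L,E) \xrightarrow{f^\ast} \operatorname{Hom}(K,E) \xrightarrow{\delta^\ast} \operatorname{Hom}(C[-1],E)$.

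The crux is to lift $p$ through $f$. Since $C \in D^{<a}_{\operatorname{Qcoh}}(X)$ we have $C[-1] \in D^{\leq a}_{\operatorname{Qcoh}}(X)$, whereas $E \in D^{\geq a+1}_{\operatorname{coh}}(X)$; orthogonality of the standard $t$-structure on $D_{\operatorname{Qcoh}}(X)$ then forces $\operatorname{Hom}(C[-1],E)=0$. Hence $\delta^\ast(p) = p\circ\delta = 0$, so by exactness there is $q \colon L \to E$ with $qf = p$, whence $q\circ(f\iota) = p\iota = \operatorname{id}_E$ and $E$ is a retract of $L$. As $L \in \mathcal{B}^{\star n} \subseteq \langle \mathcal{B} \rangle_n$ and $\langle \mathcal{B} \rangle_n$ is closed under retracts, we conclude $E \in \langle \mathcal{B} \rangle_n$. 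I expect this lifting step to be the main obstacle: it is exactly where the boundedness of $E$ is used, through the $t$-structure vanishing that lets the retraction factor through $f$, whereas the reduction to the $\star$-formulation and the verification that $K$ remains bounded above are routine bookkeeping.
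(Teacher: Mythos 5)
Your proof is correct and follows the same route as the paper's: realize $E$ as a retract of some $K\in\big(\mathbf{R}\pi_\ast D^{-}_{\operatorname{coh}}(Y)\big)^{\star n}$, approximate $K$ by $L\in\big(\mathbf{R}\pi_\ast D^{b}_{\operatorname{coh}}(Y)\big)^{\star n}$ via \Cref{lem:factoring_n_fold_extensions}, and lift the retraction $K\to E$ through $f\colon K\to L$ using boundedness of $E$. The paper compresses your $t$-structure orthogonality argument ($\operatorname{Hom}(C[-1],E)=0$ since $C[-1]\in D^{\leq a}_{\operatorname{Qcoh}}(X)$ and $E\in D^{>a}_{\operatorname{Qcoh}}(X)$) into the phrase ``for degree reasons,'' so your write-up is simply a fuller version of the same argument.
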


\begin{proof}
    The hypothesis tells us $E$ is a retract of an object $K\in \big(\mathbf{R}\pi_\ast D^{-}_{\operatorname{coh}}(Y) \big)^{\star n}$. Choose an integer $a$ such that $E$ belongs to $D^{> a}_{\operatorname{Qcoh}}(X)$. Consider the map $K \to L$ obtained from \Cref{lem:factoring_n_fold_extensions}. We can check that $E$ is a retract of $L$ as the retraction $K \to E$ factors through $L$ for degree reasons.
\end{proof}

\begin{lemma}\label{lem:weak_form_aoki}
    If $\pi \colon Y \to X$ is a proper surjective morphism of Noetherian schemes, then there is an integer $n$ such that $D^b_{\operatorname{coh}}(X)= \langle \mathbf{R}\pi_\ast D^b_{\operatorname{coh}}(Y) \rangle_n$.
\end{lemma}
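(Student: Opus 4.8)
The plan is to separate the two difficulties hidden in the statement: producing a \emph{uniform} cone bound, and landing inside the bounded (rather than merely bounded-above) image of $\mathbf{R}\pi_\ast$. Write $\mathcal{A} := \mathbf{R}\pi_\ast D^-_{\operatorname{coh}}(Y)$ and $\mathcal{B} := \mathbf{R}\pi_\ast D^b_{\operatorname{coh}}(Y)$. The strategy is to first establish $D^b_{\operatorname{coh}}(X) \subseteq \langle \mathcal{A} \rangle_n$ for a single $n$, and then descend from $\mathcal{A}$ to $\mathcal{B}$ via \Cref{lem:truncate_down_to_bounded}. The reverse containment $\langle \mathcal{B} \rangle_n \subseteq D^b_{\operatorname{coh}}(X)$ is automatic, since $\mathbf{R}\pi_\ast$ carries $D^b_{\operatorname{coh}}(Y)$ into $D^b_{\operatorname{coh}}(X)$ for $\pi$ proper and $D^b_{\operatorname{coh}}(X)$ is thick.

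First I would extract the single object $\mathcal{O}_X$. The classical generation statement $\langle \mathcal{B} \rangle = D^b_{\operatorname{coh}}(X)$ --- obtained by Noetherian induction together with \Cref{thm:devissage}, as recorded above the lemma --- already gives $\mathcal{O}_X \in \langle \mathcal{B} \rangle \subseteq \langle \mathcal{A} \rangle$. Since $\langle \mathcal{A}\rangle = \bigcup_{m \geq 0} \langle \mathcal{A} \rangle_m$, there is a finite $n$ with $\mathcal{O}_X \in \langle \mathcal{A} \rangle_n$. The point of this reduction is that a cone bound for the \emph{single} object $\mathcal{O}_X$ comes for free from classical generation, whereas a bound valid for all of $D^b_{\operatorname{coh}}(X)$ simultaneously is precisely what must be proved.

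The key step is to propagate this bound to every object by tensoring. Fix $E \in D^b_{\operatorname{coh}}(X)$ and apply the exact endofunctor $(-)\overset{\mathbf{L}}{\otimes} E$ of $D_{\operatorname{Qcoh}}(X)$; being triangulated, it sends $\langle \mathcal{A} \rangle_n$ into $\langle \mathcal{A} \overset{\mathbf{L}}{\otimes} E \rangle_n$. For any $F \in D^-_{\operatorname{coh}}(Y)$ the projection formula gives $\mathbf{R}\pi_\ast F \overset{\mathbf{L}}{\otimes} E \cong \mathbf{R}\pi_\ast(F \overset{\mathbf{L}}{\otimes} \mathbf{L}\pi^\ast E)$, and since $\mathbf{L}\pi^\ast E \in D^-_{\operatorname{coh}}(Y)$ the complex $F \overset{\mathbf{L}}{\otimes} \mathbf{L}\pi^\ast E$ again lies in $D^-_{\operatorname{coh}}(Y)$; hence $\mathcal{A} \overset{\mathbf{L}}{\otimes} E \subseteq \mathcal{A}$. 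Combining these, $E = \mathcal{O}_X \overset{\mathbf{L}}{\otimes} E \in \langle \mathcal{A} \overset{\mathbf{L}}{\otimes} E \rangle_n \subseteq \langle \mathcal{A} \rangle_n$, with $n$ independent of $E$. This yields $D^b_{\operatorname{coh}}(X) \subseteq \langle \mathbf{R}\pi_\ast D^-_{\operatorname{coh}}(Y) \rangle_n$.

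Finally, every such $E$ lies in $D^b_{\operatorname{coh}}(X) \cap \langle \mathbf{R}\pi_\ast D^-_{\operatorname{coh}}(Y) \rangle_n$, so \Cref{lem:truncate_down_to_bounded} places it in $\langle \mathbf{R}\pi_\ast D^b_{\operatorname{coh}}(Y) \rangle_n$, giving the desired equality with this $n$. I expect the main obstacle to be the unboundedness of $\mathbf{L}\pi^\ast$: because $\pi$ need not have finite Tor-dimension, $\mathbf{R}\pi_\ast \mathbf{L}\pi^\ast E$ is only bounded above, which forces the entire tensoring argument to run inside $\mathcal{A} = \mathbf{R}\pi_\ast D^-_{\operatorname{coh}}(Y)$ rather than $\mathcal{B}$. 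Taming this is exactly the role of the boundedness and truncation package (\Cref{lem:boundedness,lem:factoring_n_fold_extensions,lem:truncate_down_to_bounded}), and checking that the \emph{same} $n$ survives the passage back to $D^b$ is the crux.
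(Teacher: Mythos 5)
Your formal skeleton is correct, and it is genuinely different from the paper's route. The paper obtains uniformity wholesale: it invokes the argument of \cite[Proposition 4.4]{Aoki:2021} to produce an integer $n$ with $D^-_{\operatorname{coh}}(X)\subseteq \langle \mathbf{R}\pi_\ast D^-_{\operatorname{coh}}(Y)\rangle_n$, and then applies \Cref{lem:truncate_down_to_bounded}, exactly as you do in your final step. Your middle step is the novel part: since $(-)\overset{\mathbf{L}}{\otimes} E$ is exact, it carries $\langle \mathbf{R}\pi_\ast D^-_{\operatorname{coh}}(Y)\rangle_n$ into $\langle \mathbf{R}\pi_\ast D^-_{\operatorname{coh}}(Y)\overset{\mathbf{L}}{\otimes} E\rangle_n$, and by the projection formula together with the stability of $D^-_{\operatorname{coh}}(Y)$ under $(-)\overset{\mathbf{L}}{\otimes}\mathbf{L}\pi^\ast E$, the class $\mathbf{R}\pi_\ast D^-_{\operatorname{coh}}(Y)$ is a tensor ideal over $D^b_{\operatorname{coh}}(X)$. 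This correctly shows that once the \emph{single} object $\mathcal{O}_X$ lies in $\langle \mathbf{R}\pi_\ast D^-_{\operatorname{coh}}(Y)\rangle_n$, the same $n$ works for every object of $D^b_{\operatorname{coh}}(X)$: uniformity is formal. That is a nice reduction the paper does not make.

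The genuine gap is your starting point. You take $\mathcal{O}_X\in\langle \mathbf{R}\pi_\ast D^b_{\operatorname{coh}}(Y)\rangle$ from ``the classical generation statement \ldots as recorded above the lemma,'' but that sentence of the paper is not a citable result: it is an unproven prose remark describing the authors' earlier, superseded approach (``Initially, we showed that \ldots\ But we later realized how to refine this''), and no proof of it appears anywhere in the paper --- indeed, the paper's own proof of this lemma deliberately avoids it, importing the uniform $D^-$ statement from \cite{Aoki:2021} instead. This input is where \emph{all} of the geometry lives: properness and surjectivity of $\pi$ enter your argument nowhere else, so as written you have proved only the conditional statement ``if $\mathcal{O}_X$ is finitely built by $\mathbf{R}\pi_\ast D^b_{\operatorname{coh}}(Y)$, then the lemma holds.'' To close the gap you must supply that input, for instance by Noetherian induction: reduce to $X$ integral, choose an integral closed subscheme $Y'\subseteq Y$ generically finite over $X$, use generic flatness to find a dense affine open $U$ on which $\pi'_\ast\mathcal{O}_{Y'}$ is finite locally free of positive rank, conclude $\mathcal{O}_U\in\langle \pi'_\ast\mathcal{O}_{Y'}|_U\rangle$ from the classification of thick subcategories of perfect complexes by support \cite{Thomason:1997}, and absorb the complement of $U$ via localization and the inductive hypothesis. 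Alternatively you could cite a published source for the input; but the natural one is precisely \cite[Proposition 4.4]{Aoki:2021}, which already yields the uniform $D^-$ statement and would render your tensor trick unnecessary.
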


\begin{proof}
    There is an integer $n$ such that each object of $D^-_{\operatorname{coh}}(X)$ belongs to $\langle \mathbf{R} \pi_\ast D^{-}_{\operatorname{coh}}(Y) \rangle_n$. This can be observed from the argument for \cite[Proposition 4.4]{Aoki:2021}. The desired claim follows from \Cref{lem:truncate_down_to_bounded}.
\end{proof}

We obtain generalizations of \cite[Theorem E]{Lank:2023}, \cite[Lemma 3.15]{Lank/Olander:2024}, and \cite[Corollary 8.1.3]{Gaitsgory:2013}.

\begin{proposition}\label{prop:devissage_classical}
    Let $\pi\colon Y \to X$ be a proper surjective morphism of Noetherian schemes. If $G$ is a classical (resp.\ strong) generator for $D^b_{\operatorname{coh}}(Y)$, then $\mathbf{R}\pi_\ast G$ is a classical (resp.\ strong) generator for $D^b_{\operatorname{coh}}(X)$.
\end{proposition}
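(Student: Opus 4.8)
The plan is to treat \Cref{lem:weak_form_aoki} as the engine and build the statement on top of it by tracking how the triangulated functor $\mathbf{R}\pi_\ast$ interacts with the thickening filtration $\langle - \rangle_n$. Two facts I would isolate first are: (i) since $\pi$ is proper, $\mathbf{R}\pi_\ast$ restricts to an exact functor $D^b_{\operatorname{coh}}(Y)\to D^b_{\operatorname{coh}}(X)$, so in particular $\mathbf{R}\pi_\ast G$ is a genuine object of $D^b_{\operatorname{coh}}(X)$; and (ii) being an additive triangulated functor, $\mathbf{R}\pi_\ast$ sends distinguished triangles to distinguished triangles and commutes with shifts, finite coproducts, and retracts. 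From (ii) one gets the key compatibility $\mathbf{R}\pi_\ast \langle G\rangle_m \subseteq \langle \mathbf{R}\pi_\ast G\rangle_m$ for every $m$ (including the exhaustive case, i.e.\ $\mathbf{R}\pi_\ast\langle G\rangle \subseteq \langle \mathbf{R}\pi_\ast G\rangle$), simply because each building operation on the source is carried to the same operation on the target.

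For the classical case, suppose $\langle G\rangle = D^b_{\operatorname{coh}}(Y)$. Then by the compatibility above every object of $\mathbf{R}\pi_\ast D^b_{\operatorname{coh}}(Y)$ lies in $\langle \mathbf{R}\pi_\ast G\rangle$, hence $\langle \mathbf{R}\pi_\ast D^b_{\operatorname{coh}}(Y)\rangle \subseteq \langle \mathbf{R}\pi_\ast G\rangle$. Feeding this into \Cref{lem:weak_form_aoki}, which supplies an $n$ with $D^b_{\operatorname{coh}}(X) = \langle \mathbf{R}\pi_\ast D^b_{\operatorname{coh}}(Y)\rangle_n \subseteq \langle \mathbf{R}\pi_\ast D^b_{\operatorname{coh}}(Y)\rangle$, I conclude $\langle \mathbf{R}\pi_\ast G\rangle = D^b_{\operatorname{coh}}(X)$, so $\mathbf{R}\pi_\ast G$ is a classical generator.

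For the strong case I would run the same argument quantitatively, which requires the standard multiplicativity of the filtration: $\langle \mathcal{S}\rangle_a \star \langle \mathcal{S}\rangle_b \subseteq \langle \mathcal{S}\rangle_{a+b}$ and, iterating, $\langle \langle \mathcal{S}\rangle_a\rangle_b \subseteq \langle \mathcal{S}\rangle_{ab}$. If $\langle G\rangle_m = D^b_{\operatorname{coh}}(Y)$, then $\mathbf{R}\pi_\ast D^b_{\operatorname{coh}}(Y) \subseteq \langle \mathbf{R}\pi_\ast G\rangle_m$, so with the $n$ from \Cref{lem:weak_form_aoki} I obtain
\[
    D^b_{\operatorname{coh}}(X) = \langle \mathbf{R}\pi_\ast D^b_{\operatorname{coh}}(Y)\rangle_n \subseteq \big\langle \langle \mathbf{R}\pi_\ast G\rangle_m \big\rangle_n \subseteq \langle \mathbf{R}\pi_\ast G\rangle_{mn},
\]
which exhibits $\mathbf{R}\pi_\ast G$ as a strong generator.

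The genuine content is entirely absorbed into \Cref{lem:weak_form_aoki} (and the boundedness lemmas feeding it), so I do not expect a real obstacle here; the proof is assembly. The only points needing a line of justification are that properness guarantees $\mathbf{R}\pi_\ast$ preserves bounded coherent complexes, and the elementary fact that an additive triangulated functor respects the operations generating $\langle - \rangle_m$, giving $\mathbf{R}\pi_\ast\langle G\rangle_m \subseteq \langle \mathbf{R}\pi_\ast G\rangle_m$. If I wanted to be careful, the one spot worth double-checking is the retract step: a retract of some $\mathbf{R}\pi_\ast E$ need not obviously lie in the image of $\mathbf{R}\pi_\ast$, but this causes no trouble since retracts are already permitted inside $\langle \mathbf{R}\pi_\ast G\rangle_m$ on the target side.
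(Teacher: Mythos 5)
Your proposal is correct and follows essentially the same route as the paper: both use \Cref{lem:weak_form_aoki} as the engine and then push generation through the exact functor $\mathbf{R}\pi_\ast$, which respects the operations defining $\langle - \rangle_m$. Your write-up is in fact slightly more explicit than the paper's, which handles the classical case object-by-object and dismisses the strong case as ``similar,'' whereas you spell out the quantitative step $\langle \langle \mathbf{R}\pi_\ast G\rangle_m\rangle_n \subseteq \langle \mathbf{R}\pi_\ast G\rangle_{mn}$.
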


\begin{proof}
    We prove the case for classical generators as the other case follows similarly. By \Cref{lem:weak_form_aoki}, there is an integer $n$ such that $D^b_{\operatorname{coh}}(X)= \langle \mathbf{R}\pi_\ast D^b_{\operatorname{coh}}(Y) \rangle_n$. Let $E$ be an object of $D^b_{\operatorname{coh}}(X)$. We can find an object $E^\prime$ in $D^b_{\operatorname{coh}}(Y)$ such that $E$ is finitely built by $\mathbf{R}\pi_\ast E^\prime$. However, $\mathbf{R}\pi_\ast G$ finitely builds $\mathbf{R}\pi_\ast E^\prime$, and so, $E$ belongs to $\langle D^b_{\operatorname{coh}}(X) \rangle$. This completes the proof.
\end{proof}

\begin{example}\label{ex:generators_variety}
    Let $X$ be a variety over a perfect field. From \cite[Theorem 4.1]{deJong:1996}, there exists a proper surjective morphism $\pi \colon \widetilde{X}\to X$ such that $\dim \widetilde{X} = \dim X$ and $\widetilde{X}$ is a regular quasi-projective variety. Let $\mathcal{L}$ be a very ample line bundle on $\widetilde{X}$. By \cite[Theorem 4]{Orlov:2009} and \cite[Proposition 7.31]{Rouquier:2008}, we know that $G:= \bigoplus^{\dim X}_{i=0} \mathcal{L}^{\otimes i}$ is a strong generator for $D^b_{\operatorname{coh}}(\widetilde{X})$. From \Cref{prop:devissage_classical}, one has that $\mathbf{R}\pi_\ast G$ is a strong generator for $D^b_{\operatorname{coh}}(X)$.
\end{example} 

\bibliographystyle{alpha}
\bibliography{mainbib}

\end{document}